\newtheorem{thm}{Theorem}[section]
\newtheorem{lem}[thm]{Lemma}
\newtheorem{prop}[thm]{Proposition}
\theoremstyle{definition}
\newtheorem{cor}[thm]{Corollary}
\newtheorem{claim}{Claim}
\newtheorem{hyp}{Hypothesis}
\DeclareMathOperator\Aut{Aut}
\DeclareMathOperator\sym{Sym}
\DeclareMathOperator\alt{Alt}
\newcommand\nv{r} 
\newcommand{\pgl}[2]{\operatorname{PGL}_{#1}(#2)}
\newcommand{\psl}[2]{\operatorname{PSL}_#1(#2)}
\newcommand{\gl}[2]{\operatorname{GL}_{#1}(#2)}
\newcommand{\sln}[2]{\operatorname{SL}_{#1}(#2)}
\newcommand{\dih}[1]{\operatorname{D}_{#1}}
\newcommand{\itbf}[1]{{\bf{{\emph{{#1}}}}}}
\letcs\replicate{prg_replicate:nn}
\begin{document}	
	
	\title[]{The intersection density of cubic arc-transitive graphs with $2$-arc-regular full automorphism group equal to $\pgl{2}{q}$}
	
	\author[K. Meagher]{Karen Meagher}
	\address{Department of Mathematics and Statitstics, University of Regina,\\ 3737 Wascana Parkway, Regina, Saskatchewan, Canada}\email{\href{mailto:karen.meagher@uregina.ca}{karen.meagher@uregina.ca}}
	\author[A.S. Razafimahatratra]{Andriaherimanana Sarobidy Razafimahatratra}
	\address{Fields Institute for
		Research in Mathematical Sciences,\\ 222 College St, Toronto, Canada}
	\email{\href{mailto:sarobidy@phystech.edu}{sarobidy@phystech.edu}}

	\subjclass[2010]{Primary 05C35; Secondary 05C69, 20B05}
	
	\keywords{Derangement graphs, cocliques, projective special linear groups}
	
	\date{\today}
	
	\maketitle
	
	\begin{abstract}
		The \emph{intersection density} of a transitive permutation group $G\leq \sym(\Omega)$ is the ratio between the largest size of a subset of $G$ in which any two agree on at least one element of $\Omega$, and the order of a point-stabilizer of $G$. In this paper, we determine the intersection densities of the automorphism group of the arc-transitive graphs admitting a $2$-arc-regular full automorphism group $G^* = \pgl{2}{q}$ and an arc-regular subgroup of automorphism $G = \psl{2}{q}$.
	\end{abstract}
	
	
	\section{Introduction}	
	Given a transitive permutation group $G\leq \sym(V)$ where $V$ is a finite non-empty set, a subset $\mathcal{F}\subset G$ is \itbf{intersecting} if for any $f, g \in \mathcal{F}$, there exist $v \in V$ such that $f(v) = g(v)$. If $\mathcal{F} \subset G$ is intersecting and $g\in \mathcal{F}$, then the set $g^{-1}\mathcal{F} = \{ g^{-1}x: x\in \mathcal{F} \}$ is an intersecting set containing the identity of $G$. Therefore, we may assume without of generality that any intersecting set contains the identity of $G$. The \itbf{intersection density} of $G\leq \sym(V)$ is the rational number
	\begin{align*}
		\rho(G) = \max\frac{\left\{ |\mathcal{F}|: \mathcal{F} \subset G \mbox{ is intersecting} \right\}}{|G|/|V|}.
	\end{align*}
	If $G_v$ is the stabilizer of $v\in V$ in $G$, then $G_v$ is intersecting of size $\frac{|G|}{|V|}$, and so $\rho(G)\geq 1$.
	
	The notion of intersection density was first introduced in \cite{li2020erd} to measure how large the intersecting sets in a given transitive group can be compared to its point stabilizers. The majority of the work prior to \cite{li2020erd} focused on transitive groups with intersection density equal to $1$, see \cite{Frankl1977maximum,godsil2009new,meagher2021erdHos,meagher2011erdHos,meagher2016erdHos} for example. In the past few years, the study of intersection densities of transitive groups has become a very active research area. For instance, it was shown in \cite{meagher180triangles} that the intersection density of a transitive group is at most a third of the order of the group, and it was recently shown in \cite{cazzola2025kronecker} that there are only four transitive groups with intersection densities attaining this upper bound. In \cite{behajaina2024intersection,hujdurovic2022intersection,hujdurovic2021intersection,li2020erd}, the intersection densities of transitive groups with prescribed degrees were studied. The notion of intersection density was also recently extended to vertex-transitive graphs in \cite{kutnar2023intersection,meagher2024intersection}.
	
	In this paper, we study the intersecting densities of certain automorphism groups of cubic arc-transitive graphs. Recall that a graph $X = (V,E)$ is cubic if every vertex has degree $3$. Moreover, it is called \itbf{arc-transitive} if for any $(u,v)$ and $(u^\prime,v^\prime)$ such that $\{u,v\},\{u^\prime,v^\prime\} \in E$, there exists $g\in \Aut(X)$ such that $(u^\prime,v^\prime) = g(u,v) = (g(u),g(v))$. In general, an \itbf{$s$-arc} of $X$ is an $(s+1)$-tuple $(u_0,u_1,u_2,\ldots,u_{s})$ with distinct vertices such that $\{u_i,u_{i+1}\} \in E$ for $0\leq i\leq s-1$. The graph $X$ is called \itbf{$s$-arc-transitive} if its automorphism group acts transitively on $s$-arcs, and \itbf{$s$-arc-regular} it acts regularly on $s$-arcs. 
	
	Given a vertex-transitive graph $X = (V,E)$, the \itbf{intersection density array} of $X$ is the increasing array 
	\begin{align}
		\rho(X) := [\rho_1,\rho_2,\ldots,\rho_t]
	\end{align}
	such that for any transitive subgroup $H\leq \Aut(X)$, there exists $j\in \{1,2,\ldots,t\}$ such that $\rho(H) = \rho_j$, and for any $j\in \{1,2,\ldots,t\}$, there is at least one transitive $H \leq \Aut(X)$ whose intersection density is $\rho_j$. For example, the intersection density array of the Petersen graph is $[1,2]$ since its full automorphism group is $\sym(5)$, and the only other transitive subgroup is $\alt(5)$. The intersection densities of $\sym(5)$ and $\alt(5)$ are respectively $1$ and $2$. The \itbf{weak intersection density array} of an arc-transitive graph $X$ is the sub-array $\overline{\rho}(X)$ consisting of all $\rho\in \rho(X)$ that are realized by arc-transitive subgroups of automorphism.
	
	Cubic arc-transitive graphs have been extensively studied and they have been classified through their automorphism groups. For a comprehensive review of cubic arc-transitive graphs, the reader is referred to \cite[Chapter~10]{dobson2022symmetry}. A cubic arc-transitive graph $X = (V,E)$ is of \itbf{type $\{1,2^1\}$} if its full automorphism group $G^*$ is $2$-arc-regular, admitting a $1$-arc-regular subgroup $G$, and its edge stabilizer in $G^*$ is isomorphic to $\mathbb{Z}_2^2$. In \cite{kutnar2023intersection}, Kutnar, Maru\v{s}i\v{c} and Pujol initiated the study of the intersection densities of transitive automorphism groups of cubic arc-transitive graphs. In particular, they introduced the notion of weak intersection density array and considered it for this family of graphs.   
	By definition, cubic arc-transitive graphs of type $\{1,2^1\}$ have a 2-arc-regular full automorphism group, $G^*$ and a $1$-arc-regular transitive subgroup $G\leq G^*$. Hence, the weak intersection density array of such graph is $[\rho(G^*), \rho(G)]$ if $\rho(G)> \rho(G^*)$, and $[\rho(G)]$ if $\rho(G) = \rho(G^*)$.
	
	In~\cite{kutnar2023intersection}, under some strong restrictions, the weak intersection density array of many of these graph was computed. In this paper, we continue this work by determining the weak intersection density array of the cubic arc-transitive graphs of type $\{1,2^1\}$ with $G^* = \pgl{2}{q}$ and $G = \psl{2}{q}$, where $q$ is an odd prime power. We will assume the following hypothesis for convenience.

\begin{hyp}
	Let $X = (V,E)$ be an arc-transitive graph of type $\{1,2^1\}$ with full automorphism group $G^*=\pgl{2}{q}$, where $q=p^k$  is an odd prime power, and $G = \psl{2}{q}$ is a $1$-arc-regular subgroup of $ G^*$. \label{hyp}
\end{hyp}

If $X, G$ and $G^*$ are as in Hypothesis~\ref{hyp}, then for $v \in V$, there exists $h\in G$ of order $3$ and an involution $\nv \in G^*\setminus G$ such that $\nv h \nv^{-1} = h^{-1}$, $G_v = \langle h\rangle$ and $G_v^* = \langle h, \nv \rangle \cong\dih{6} \cong \sym(3)$. Under the action of $G^*=\pgl{2}{q}$ on $X$, exactly the elements conjugate to an element in $G_v^*\cong \sym(3)$ have a fixed point; in particular $g,h \in G^*$, are intersecting exactly when $hg^{-1}$ is conjugate to an element in $G^*_v$. Similarly, with this action, the elements with a fixed point in $G$ are exactly those conjugate to an element in $ \langle h\rangle$.

Whenever $q =2^k$, then $G = G^*$, so we do not consider these cases, and we assume henceforth that $p$ is an odd prime. 

As $|G| = \frac{(q-1)q(q+1)}{2}$ and $|G^*| = (q-1)q(q+1)$, it is clear that $G$ and $G^*$ always have elements of order $2$ and $3$. In fact, unless $q = 9$, $G = \psl{2}{q}$ is generated by an involution and an element of order $3$. Furthermore, $G^*$ is always an extension of $G$ by a cyclic group of order $2$, whenever $q$ is odd.

For $q$ a power of $3$, the following result was proved in~\cite[Theorem~7.2]{hujdurovic2022intersection-cyclic} and also in~\cite[Theorem 4.8]{kutnar2023intersection}.
\begin{thm}
	If $G = \psl{2}{3^k}$  and $G^*= \pgl{2}{3^k}$ with $k\geq 3$ an integer, is as in Hypothesis~\ref{hyp}, then 
	\begin{align*}
		\rho(G^*) = \rho(G) = 
		\begin{cases}
			3^{k-1} &\mbox{ if $k$ is odd}\\
			3^{\frac{k}{2}-1} &\mbox{ otherwise}.
		\end{cases}
	\end{align*}
	\label{thm:q=3}
\end{thm}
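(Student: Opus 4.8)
The plan is to turn the problem into a statement about $\psl{2}{q}$ and then exploit the fact that distinct Sylow $3$-subgroups of $\psl{2}{q}$ intersect trivially. First I would record the reformulation. Because $|G_v|=3$ and $q=3^k$, the element $h$ is unipotent, $\langle h\rangle$ sits inside a Sylow $3$-subgroup $P\cong(\mathbb{F}_q,+)$ of $G$, and by the description of fixed points recalled just after Hypothesis~\ref{hyp} an element of $G$ has a fixed point on $V$ precisely when it is $G^*$-conjugate into $\langle h\rangle$; since $\pgl{2}{q}$ has one class of non-trivial unipotent elements and $h^{-1}=\nv h\nv^{-1}$, that set equals $\{1\}\cup\mathcal{U}$, where $\mathcal{U}$ is the set of the $q^2-1$ non-identity unipotents of $G$, equivalently the union of the $q+1$ pairwise-trivially-intersecting Sylow $3$-subgroups (a unipotent fixes a unique point of $\mathbb{P}^1(\mathbb{F}_q)$). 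Thus an intersecting set $\mathcal{F}\ni 1$ in $G$ is exactly a subset of $\bigcup_P P$ with $\mathcal{F}\mathcal{F}^{-1}\subseteq\bigcup_P P$, and $\rho(G)=\alpha(G)/3$, where $\alpha(G)$ is the maximum size of such an $\mathcal{F}$; likewise the fixed-point elements of $G^*$ are the identity, the non-identity unipotents, and the involutions in $G^*\setminus G$ (a single $G^*$-class), and $\rho(G^*)=\alpha(G^*)/6$.

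For the lower bounds, any Sylow $3$-subgroup $P$ of $G$ is itself an intersecting set, so $\rho(G)\ge q/3=3^{k-1}$; and when $k$ is odd the element $t_0=[\operatorname{diag}(1,-1)]$ lies in $G^*\setminus G$ (its determinant $-1$ is a non-square in $\mathbb{F}_{3^k}$), normalises a Sylow $3$-subgroup $P$ and inverts it, so $D=P\rtimes\langle t_0\rangle$ has order $2q$, the coset $Pt_0$ consists of involutions of $G^*\setminus G$, and $D$ is intersecting, giving $\rho(G^*)\ge 2q/6=3^{k-1}$.

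The heart of the matter is the upper bound $\alpha(G)\le q$ for $k$ odd. If $\mathcal{F}$ lies in a single Sylow $3$-subgroup this is immediate; otherwise $\mathcal{F}$ contains non-identity elements in two distinct Sylow $3$-subgroups, and I would argue via the trace formula for unipotents. Writing a unipotent with fixed point $z=[z_1:z_2]\in\mathbb{P}^1(\mathbb{F}_q)$ as $I+s\,N_z$, with $N_z$ the rank-one nilpotent with kernel $\langle z\rangle$, one gets $\operatorname{tr}(u_1u_2^{-1})=2+s_1s_2\,[z^{(1)},z^{(2)}]^2$, where $[z,w]$ is the determinant of the coordinate vectors; hence two elements of $\mathcal{F}$ with distinct fixed points must satisfy $s_1s_2\,[z^{(1)},z^{(2)}]^2=-4$. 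Comparing this relation for pairs $(u,v)$ and $(u,v')$ sharing the first member forces each Sylow $3$-subgroup to contain at most one non-identity element of $\mathcal{F}$; and if $\mathcal{F}$ had three non-identity elements, the product of the three relations for their pairs would force $-1$ to be a square in $\mathbb{F}_{3^k}$, impossible for $k$ odd. So in this case $|\mathcal{F}|\le 3<q$, whence $\alpha(G)=q$ and $\rho(G)=3^{k-1}$. For $G^*$: if $\mathcal{F}^*\ni 1$ is intersecting in $G^*$, then $\mathcal{F}^*\cap G$ is an intersecting set of $G$, and for any $\sigma\in\mathcal{F}^*\setminus G$ the translate $(\mathcal{F}^*\setminus G)\sigma^{-1}$ is also an intersecting set of $G$ (its pairwise quotients already lie in $\{1\}\cup\mathcal{U}$), so $|\mathcal{F}^*|\le 2q$ and $\rho(G^*)\le 2q/6=3^{k-1}$, matching the lower bound.

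I expect two delicate points. First, the reduction just sketched: one must still handle the bookkeeping when several fixed points coincide (so that $\mathcal{F}$ is in effect confined to one Sylow subgroup) and the way the parameters $s_i$ rescale when the coordinate vectors are rescaled; but the displayed trace identity makes the resulting system overdetermined in a way that collapses onto the non-squareness of $-1$, which is where the hypothesis "$k$ odd" is consumed. Second, the case $k$ even: there $-1$ is a square in $\mathbb{F}_{3^k}$, the normaliser computation used above gives $N_{\pgl{2}{3^k}}(\langle h\rangle)\le\psl{2}{3^k}$, so no involution of $G^*\setminus G$ inverts an element of order $3$; thus Hypothesis~\ref{hyp} cannot be met when $k$ is even and the stated value $3^{k/2-1}$ holds vacuously in this setting (it is the value established, in the more general setting of the quoted references, where the Sylow subgroup no longer consists entirely of elements with a fixed point).
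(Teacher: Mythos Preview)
The paper does not give its own proof of this theorem; it is imported verbatim from \cite{hujdurovic2022intersection-cyclic} and \cite{kutnar2023intersection}. So there is no in-paper argument to compare against, and your proposal stands or falls on its own.

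Your argument for $k$ odd is correct and quite clean. The trace identity $\operatorname{tr}(u_1u_2^{-1})=2+s_1s_2\,[z^{(1)},z^{(2)}]^2$ for unipotents $u_i=I+s_iN_{z^{(i)}}$ follows from $\operatorname{tr}(N_zN_w)=-[z,w]^2$, and in characteristic~$3$ the condition ``trace $=\pm1$'' with distinct fixed points reduces exactly to $s_1s_2[z^{(1)},z^{(2)}]^2=-1$. Your two key deductions are then sound: dividing the relations for $(u,v)$ and $(u,v')$ forces $s_v=s_{v'}$, so at most one non-identity element of $\mathcal F$ per Sylow once two Sylows are involved; and multiplying the three pairwise relations for a putative triple yields $(-1)$ as a square, which fails for $3^k$ with $k$ odd. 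The rescaling concern you raise is harmless: $s_i\mapsto\lambda^{-2}s_i$ when $z^{(i)}\mapsto\lambda z^{(i)}$, so $s_1s_2[z^{(1)},z^{(2)}]^2$ is genuinely invariant. The upper bound for $G^*$ by splitting $\mathcal F^*$ across the two cosets of $G$ is also correct, since pairwise quotients within a coset lie in $G$ and inherit the fixed-point property from the common action on $V$; combined with the explicit intersecting subgroup $P\rtimes\langle t_0\rangle$ this pins down $\rho(G^*)=3^{k-1}$.

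Your reading of the even-$k$ case is the right one for this paper: Lemma~\ref{lem:psl-transitive} shows that Hypothesis~\ref{hyp} forces $k$ odd when $q=3^k$, so the clause $\rho=3^{k/2-1}$ is vacuous here. That value is inherited from the cited sources, which work under broader hypotheses; you are correct that it cannot be recovered from the present setup and need not be.
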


The next result was proved in \cite[Theorem~6.1]{hujdurovic2022intersection-cyclic} for the case when $q\equiv 1\pmod 3$.
\begin{thm}
	If $q=p^k\equiv 1\pmod 3$, and $G = \psl{2}{q}$ is as in Hypothesis~\ref{hyp}, then 
	\begin{align*}
		\rho(G) = 
		\begin{cases}
			\frac{4}{3} & \mbox{ if $p\neq 5$}\\
			2 &\mbox{ if $p=5$}.
		\end{cases}
	\end{align*}\label{thm:q=1mod3}
\end{thm}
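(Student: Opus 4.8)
The plan is to study the \emph{derangement graph} $\Gamma_G$ of $G = \psl{2}{q}$ acting on $V$, whose vertices are the elements of $G$ and where two elements $g,h$ are adjacent exactly when $hg^{-1}$ has no fixed point, i.e.\ when $hg^{-1}$ is \emph{not} conjugate into the point stabilizer $\langle h\rangle \cong \mathbb{Z}_3$. An intersecting set is precisely a coclique in the complement, so $\rho(G) = \alpha(\overline{\Gamma_G})\,|V|/|G|$, and since $\overline{\Gamma_G}$ is a normal Cayley graph on $G$ (the connection set $S = \{g\in G : g \text{ fixes a point}\}\setminus\{1\}$ is a union of conjugacy classes), I would use the ratio bound (Hoffman bound) together with the module method/Hoffman colouring to pin down $\alpha$. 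The key point is that the connection set for the ``intersecting'' graph consists only of the identity and the elements of order $3$ that are $G$-conjugate to a generator of $\langle h\rangle$; when $q\equiv 1\pmod 3$ this is a \emph{single} conjugacy class $C$ of elements of order $3$ (together with the identity), whereas when $q\equiv -1\pmod 3$ it could be two classes — but by hypothesis we are in the $q\equiv 1\pmod 3$ case here.

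First I would assemble the character-theoretic data for $\psl{2}{q}$: the conjugacy classes, the sizes of the relevant class $C$ of order-$3$ elements (there are $|G|/|C_G(h)| = |G|/q$ such elements when $q\equiv 1\pmod 3$, since then $C_G(h)$ is a cyclic group of order $q-1$ intersected appropriately — more precisely $|C|= q(q+1)$ up to the factor coming from fusion), and the full character table of $\psl{2}{q}$, which is classical. From this I would compute the eigenvalues of the Cayley graph on the connection set $\{1\}\cup C$ (or its complement), namely $\lambda_\chi = \frac{1}{\chi(1)}\sum_{s\in S}\chi(s)$ ranging over irreducible characters $\chi$, and identify the least eigenvalue $\tau$. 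The ratio bound then gives $\alpha \le |G|\big/\big(1 - \frac{d}{\tau}\big)$ where $d = |S|$ is the valency; comparing with the obvious intersecting set (the point stabilizer, of size $q/|G|\cdot|G| = q$... rather of size $3$) and with slightly larger natural intersecting sets (cosets of larger subgroups, or unions thereof) should show that the bound is met, giving $\rho(G) = 4/3$ generically and $\rho(G)=2$ when $p=5$.

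The reason $p=5$ is special, and where most of the real work lies, is that for $q$ a power of $5$ the group $\psl{2}{q}$ has ``extra'' intersecting structure: here $q\equiv 1 \pmod 3$ forces $k$ even and $q \equiv 1 \pmod 5$... more to the point, $\psl{2}{5}\cong \alt(5)$ sits inside $G$, and there is a subgroup (or a union of cosets of a subgroup, such as an $\alt(4)$ or the normalizer of a Sylow structure) that is intersecting and of size $\tfrac{2q}{3}$ rather than $\tfrac{q}{3}$. I would exhibit this larger intersecting set explicitly when $p=5$ — the natural candidate being a coset union built from the $\mathbb{Z}_3\rtimes\mathbb{Z}_2$ structure or a maximal subgroup of $\psl{2}{q}$ isomorphic to $\alt(4)$, $\sym(4)$, or $\alt(5)$ whose elements all fix a point in the action on $V$ — and show it is maximum by matching the ratio bound, perhaps after the module method refinement when the ratio bound alone is not tight. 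For $p \ne 5$, I would have to rule out such larger sets, which is exactly the content of showing the ratio bound $4/3$ is attained and cannot be exceeded; the Hoffman colouring (a partition of $G$ into $4/3 \cdot |G|/|V|$ many... into cosets meeting the bound) or an explicit clique of the right size is the clean certificate. The main obstacle will be the $p = 5$ analysis: verifying both that the value jumps to $2$ and that no even larger intersecting set exists, which requires a careful case analysis of subgroups of $\psl{2}{q}$ all of whose non-identity elements have order $3$ or are conjugate to such, and delicate use of the character table to confirm the eigenvalue computation in small characteristic.
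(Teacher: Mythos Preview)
The paper does not prove this theorem; it is quoted from \cite{hujdurovic2022intersection-cyclic} and used as input. So there is no proof in the paper to compare against. That said, the methods the present paper develops for the analogous case $q\equiv 2\pmod 3$ are entirely combinatorial: one restricts to the subgraph $\Gamma$ of $\overline{\Gamma_G}$ induced on the conjugacy class $\mathcal{C}_3$ of order-$3$ elements, identifies its orbital-graph decomposition under the conjugation action of $G^*$, and reads off $\omega(\Gamma)$ directly from the local structure (a Cayley graph on $\mathbb{Z}_{q+1}$ of valency $0$, $1$, or $2$). The cited reference almost certainly proceeds in the same spirit. Your spectral/character-theoretic plan via the Hoffman ratio bound is a genuinely different route.

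However, what you have written is a plan, not a proof, and it has real gaps. First, some of the bookkeeping is off: when $q\equiv 1\pmod 3$ the centralizer of $h$ in $G$ is the split torus of order $(q-1)/2$, not something of order $q$, so your class-size computation $|G|/q$ is wrong (the correct size is $q(q+1)$); and your line $\alpha(\overline{\Gamma_G})$ should be $\alpha(\Gamma_G)$ (equivalently $\omega(\overline{\Gamma_G})$). Second, and more seriously, you never actually compute the least eigenvalue, and you offer no reason to believe the ratio bound is tight here --- indeed you hedge with ``perhaps after the module method refinement''. For the answer $4/3$ the maximum intersecting set has size $4$, which is \emph{not} a subgroup or a coset of one, so the usual equality cases of the Hoffman bound (stabilizer cosets) do not apply, and tightness is far from automatic. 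Third, for $p=5$ you need an explicit intersecting set of size $6$; gesturing at $\alt(4)$, $\sym(4)$, or $\alt(5)$ is not enough, since none of these has all non-identity elements of order $3$. The actual constructions (both the size-$4$ set for $p\neq 5$ and the size-$6$ set for $p=5$) and the matching upper bounds require the kind of direct analysis of products $xh^{-1}$ for $x\in\mathcal{C}_3$ that the paper carries out in the $q\equiv 2\pmod 3$ case, and your proposal does not supply a substitute for that.
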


Following Theorem~\ref{thm:q=3} and Theorem~\ref{thm:q=1mod3}, the only open cases for both $G=\psl{2}{q}$ and $G^*=\pgl{2}{q}$ are when $q = p^k\equiv 2\pmod 3$; in this paper we resolve these cases. Note if $q = p^k\equiv 2\pmod 3$, then $k$ is odd and $p\equiv 2 \pmod 3$. Theorem~\ref{thm:q=1mod3} computes the intersection density of arc-regular subgroups in Hypothesis~\ref{hyp}, but not the full automorphism groups. So for $q \equiv 1 \pmod 3$, we will also determine the intersection density of $G^*=\pgl{2}{q}$.

The main results of this paper, stated together with the known results, is the following.
	\begin{thm}
		If $X$ is a cubic arc-transitive graph as in Hypothesis~\ref{hyp}, with $q = p^k$ an odd prime power, and $(G,G^*) = \left(\psl{2}{q},\pgl{2}{q}\right)$, then 
		\begin{align*}
			\overline{\rho}(X)
			=
			\begin{cases}
				 [ 3^{k-1} ] &\mbox{ if $q=3^k$ and $k$ odd,}\\
			         [ 3^{\frac{k}{2}-1} ] & \mbox{ if $q=3^k$ and  $k$ even,} \\
				[1,2] & \mbox{ if $q\equiv 1\pmod 3$ and $p= 5$},\\
				[1,\tfrac{4}{3}] & \mbox{ if $q\equiv 1\pmod 3$ and $p\neq 5$},\\
				[1] & \mbox{ if $q\equiv 2\pmod 3$ and $q\equiv \pm 2\pmod 5$},\\
				[1,\tfrac{4}{3}] & \mbox{ if $q\equiv 2\pmod 3$ and $q \equiv \pm 1\pmod 5$ or   $q\equiv 0\pmod 5$}. 
			\end{cases}
		\end{align*}
		\label{thm:main}
	\end{thm}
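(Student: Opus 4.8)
The plan is to handle $\psl{2}{q}$ and $\pgl{2}{q}$ together and to reduce everything to a structural analysis of the corresponding derangement graphs. By Theorems~\ref{thm:q=3} and~\ref{thm:q=1mod3}, what remains is to compute $\rho(\pgl{2}{q})$ for every odd $q$ with $3\nmid q$, and $\rho(\psl{2}{q})$ when $q\equiv 2\pmod 3$.

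First I would fix the combinatorial picture. Since the elements of a transitive group having a fixed point are exactly the conjugates of the elements of a point stabiliser, Hypothesis~\ref{hyp} gives that, for $G=\psl{2}{q}$ acting on $V$, the non-derangements are the identity together with the single conjugacy class of order-$3$ elements (using that $h$ is conjugate to $h^{-1}$, the normaliser of the torus containing $h$ being already dihedral in $\psl{2}{q}$), while for $G^*=\pgl{2}{q}$ one gains in addition the conjugacy class of the involution $\nv$. The sizes of these classes come from the centralisers of $h$ and $\nv$, which are standard in $\psl{2}{q}$ and $\pgl{2}{q}$. It is convenient to use the sharply $3$-transitive action of $\pgl{2}{q}$ on the projective line $\pg{1}{q}$: a copy of $\sym(3)$ arises as the setwise stabiliser of an unordered triple, and when $\nv$ fixes a point of the line (equivalently, when $G_v^*$ is such a triple stabiliser) the graph is exactly the action of $\pgl{2}{q}$ on the $3$-subsets of $\pg{1}{q}$, with $\psl{2}{q}$ acting with cyclic stabiliser $\langle h\rangle$; in this model a non-identity element is a non-derangement iff it has order $3$ (every order-$3$ element cyclically permutes some triple) or it is an involution with a fixed point on the line. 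Throughout I would also use the inequality $\rho(\pgl{2}{q})\leq\rho(\psl{2}{q})$, obtained by cutting an intersecting $\mathcal F\subseteq\pgl{2}{q}$ into $\mathcal F\cap\psl{2}{q}$ and a left translate of $\mathcal F\setminus\psl{2}{q}$, each of which is an intersecting family in $\psl{2}{q}$ of size at most $3\,\rho(\psl{2}{q})$; in particular every case in which $\rho(\psl{2}{q})=1$ immediately yields $\rho(\pgl{2}{q})=1$.

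The lower bounds are the easy half: $\rho\geq 1$ always holds via a point stabiliser, and the value $\tfrac43$ comes from an explicit intersecting set of size $4$ whenever $\psl{2}{q}$ contains an $\alt(5)$, that is, whenever $q\equiv 0,\pm 1\pmod 5$ (this is also what lies behind Theorem~\ref{thm:q=1mod3}). Concretely, inside such an $\alt(5)$ take $\langle h\rangle$ to be a point stabiliser $\langle (123)\rangle$ and put $\mathcal F=\{1\}\cup\{(abc):c\neq a,b\}$ for a fixed pair $\{a,b\}$; the three $3$-cycles are non-derangements and their pairwise quotients are again $3$-cycles, so $\mathcal F$ is intersecting of size $4$, and since $\alt(5)$-conjugacy implies $\psl{2}{q}$-conjugacy this is an intersecting family in $\psl{2}{q}$. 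For $q\equiv 1\pmod 3$ and $p=5$ one has in addition the size-$6$ family of Theorem~\ref{thm:q=1mod3}.

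The main obstacle is the matching upper bounds. The Hoffman (ratio) bound by itself is far too weak here: computing the eigenvalues of the derangement graph from the character tables of $\psl{2}{q}$ and $\pgl{2}{q}$ — they take the form $\eta_\chi=-1-\chi(1)^{-1}\big(c_h\chi(h)+c_{\nv}\chi(\nv)\big)$ with $c_h,c_{\nv}$ the relevant class sizes — gives a least eigenvalue of order only $-q$, hence a bound of order $q$. So I would argue structurally. Given an intersecting $\mathcal F\ni 1$, every element of $\mathcal F$ is a non-derangement and every pairwise quotient is too; I would bound the subgroup $\langle\mathcal F\rangle$ using Dickson's classification of subgroups of $\psl{2}{q}$ and $\pgl{2}{q}$. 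The affine (Borel-type) subgroups and the non-trivial dihedral subgroups carry no intersecting family larger than a point stabiliser in the actions that arise (for instance, when $3\nmid q-1$ an affine subgroup contains no element of order $3$ at all, and any two distinct involutions in it are incompatible); the subfield subgroups are treated by induction on $q$; the groups $\alt(4)$ and $\sym(4)$ (the latter, on the $\pgl$-side, acting naturally on $4$ points, where EKR holds) have intersection density $1$ in the relevant action; and $\alt(5)$ has intersection density exactly $\tfrac43$, realised by the families above — and this is precisely where the residue of $q$ modulo $5$ enters, since $\alt(5)\leq\psl{2}{q}$ iff $q\equiv 0,\pm1\pmod 5$. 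The remaining, and hardest, case is that $\mathcal F$ generates the whole group; here I would analyse the order-$3$ elements of $\psl{2}{q}$ (for $q\equiv 2\pmod 3$, these are elliptic rotations, each corresponding to a Galois-conjugate pair of points of $\pg{1}{q^2}$ together with an orientation), translate the compatibility condition into cross-ratio identities, and show that a pairwise-compatible family of size exceeding the claimed bound forces the configuration into an $\alt(5)$ (so cannot occur when $q\equiv\pm 2\pmod 5$), while in the exceptional $p=5$ cases it reproduces the size-$6$ family; the ratio bound is used only to confine the possible cardinalities to a short list before this analysis. Assembling the structural reduction, the subgroup base cases, the $\pg{1}{q^2}$-geometry for the generating case, the known results of Theorems~\ref{thm:q=3} and~\ref{thm:q=1mod3}, and the inequality $\rho(\pgl{2}{q})\leq\rho(\psl{2}{q})$ then yields the stated value of $\overline{\rho}(X)$ in every case.
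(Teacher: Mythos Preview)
Your proposal takes a genuinely different route from the paper. The paper never invokes Dickson's subgroup classification; for $q\equiv 2\pmod 3$ it works entirely inside the first subconstituent $\Gamma=\overline{\Gamma_G}[\mathcal C_3]$ of the complement of the derangement graph. The key structural result (Lemma~\ref{lem:main}) is that $G^*$ acts vertex-transitively on $\Gamma$ by conjugation with stabiliser $\operatorname{C}_{G^*}(h)\cong\mathbb Z_{q+1}$, and that $\Gamma$ is either a perfect matching or the union of exactly two orbital graphs, with suborbits $\{h^{-1}\}$ and a set $N$ of size $q+1$. Using Lemma~\ref{lem:basic-int-set} to discard $h^{-1}$, the problem reduces to the clique number of the second subconstituent $\tilde\Gamma=\Gamma[N]$, which is a Cayley graph on $\mathbb Z_{q+1}$. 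An explicit trace computation then shows that the connection set of $\tilde\Gamma$ is governed by the single equation $(2\alpha-1)^2=5$ over $\mathbb F_p$: the Legendre symbol $\bigl(\tfrac{5}{p}\bigr)$ decides whether $\tilde\Gamma$ is empty, a perfect matching, or $2$-regular (in the last case a further trace check rules out $3$-cycles). This is how $q\bmod 5$ enters for the paper --- as solvability of a quadratic, not via the presence of $\alt(5)$. Your $\alt(5)$ viewpoint is more conceptual for the lower bound and explains \emph{why} the residue modulo~$5$ should matter; the paper's computational route, on the other hand, delivers the upper bound simultaneously once $\tilde\Gamma$ is identified.

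That said, your sketch has two genuine gaps. First, the ``generating case'' carries essentially all the difficulty, and you have only a gesture towards it: you have not shown that a pairwise-compatible family of order-$3$ elements of size exceeding $4$ must sit inside an $\alt(5)$, and the cross-ratio programme on $\pg{1}{q^2}$ is not developed enough to see that it closes (nor can the Hoffman bound help ``confine cardinalities to a short list'' here, since you already noted it gives only a bound of order~$q$). Second, and more seriously for the shape of the answer, the inequality $\rho(G^*)\leq\rho(G)$ yields only $\rho(G^*)\leq\tfrac43$ (or $\leq 2$ when $p=5$ and $q\equiv 1\pmod 3$) in the cases with $\rho(G)>1$, whereas the theorem asserts $\rho(G^*)=1$ there; you supply no mechanism for this strict drop. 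The paper handles it by a separate, hands-on trace argument: assuming an intersecting $\mathcal F\subseteq G^*$ has $|\mathcal F\cap G\nv|=4$ and $|\mathcal F\cap G|\geq 2$, it writes the four involutions in $\mathcal F\cap G\nv$ in explicit matrix form, imposes the trace-$0$ and trace-$\pm 1$ conditions coming from intersection with $1$, with $h$, and with one another, and obtains a contradiction by pigeonhole on the resulting system $2+2a_xa_y+a_x+a_y=\pm 1$. Your outline contains no substitute for this step.
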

	
\section{Background results}

\subsection{Orbital graphs}
	Throughout this section, we let $G\leq \sym(V)$ be a finite transitive group. The group $G$ acts intransitively on $V\times V$, and an orbit for this action is called an \itbf{orbital} of $G$. By transitivity of $G$, the set $O_0 = \{ (v, v): v\in V \}$ is an orbital, called the trivial orbital, or the diagonal orbital of $G$. Any other orbital of $G\leq \sym(V)$ is a subset of $\{ (u, v): u, v \in V,  \, u\neq v \}$. 
	
	Let $O$ be a non-trivial orbital of $G$. The set $O^* = \left\{ (v, u) : (u, v)\in O \right\}$ is also an orbital of $G$, and if $O$ is an orbital such that $O = O^*$, then $O$ is called a \itbf{symmetric orbital}. The orbital $O$ determines a digraph $X_O = (V,O)$, called an \itbf{orbital graph}, whose vertex set is $V$, and whose arc set is $O$. If $O$ is a symmetric orbital, then $X_O$ may be viewed as an undirected graph, otherwise, $X_O$ is an oriented digraph. A \itbf{basic orbital graph} of $G$ is an undirected graph of the form $X_{O}$ for some symmetric orbital $O$, or $X_{O}\cup X_{O^*}$ for some non-symmetric orbital $O$ of $G$.
	
	Recall that a graph $X=(V,E)$ is called \itbf{vertex transitive} if $\Aut(X)$ acts transitively on $V$. The next result shows that any vertex-transitive graph can be reconstructed from orbital digraphs.
	\begin{lem}\cite[Proposition~1.4.6]{dobson2022symmetry}
		Let $X = (V,E)$ be a vertex-transitive graph. If $G\leq \Aut(X)$ is transitive, then $X$ is a union of basic orbital graphs of $G$.\label{lem:transitive}
	\end{lem}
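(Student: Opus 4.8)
The plan is to pass from the undirected graph $X$ to its set of arcs and exploit that every element of $G\leq\Aut(X)$ fixes this set. First I would associate to the edge set $E$ the symmetric set of ordered pairs
\[
A = \{ (u,v) : \{u,v\}\in E \}\subseteq V\times V,
\]
so that $(u,v)\in A$ if and only if $(v,u)\in A$, and $A$ meets none of the diagonal pairs $(v,v)$. Since every $g\in G$ is an automorphism of $X$, it maps edges to edges, so $(g(u),g(v))\in A$ whenever $(u,v)\in A$; thus $A$ is invariant under the action of $G$ on $V\times V$.

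Because the orbitals of $G$ are precisely the orbits of $G$ on $V\times V$, they partition $V\times V$, and any $G$-invariant subset is a union of orbitals. In particular I would write $A=O_1\cup\cdots\cup O_m$ for some non-trivial orbitals $O_1,\ldots,O_m$ (none equal to the diagonal orbital $O_0$, since $A$ avoids the diagonal). The key observation is that the symmetry of $A$ forces the collection $\{O_1,\ldots,O_m\}$ to be closed under the involution $O\mapsto O^*$: if $O_i\subseteq A$, then reversing each pair shows $O_i^*\subseteq A$, so $O_i^*$ is again one of the $O_j$.

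I would then group these orbitals into basic orbital graphs of $G$. Each symmetric orbital $O_i=O_i^*$ contributes the undirected graph $X_{O_i}$, while each non-symmetric orbital pairs with its distinct reverse $O_i^*$ (also in the list, by the closure just noted) to contribute $X_{O_i}\cup X_{O_i^*}$; together these account for all of $O_1,\ldots,O_m$. Taking the union of these basic orbital graphs and reading each arc $(u,v)$ as the edge $\{u,v\}$ recovers exactly the edge set $\{\{u,v\}:(u,v)\in A\}=E$, so $X$ is the union of these basic orbital graphs.

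The content of the argument lies entirely in the bookkeeping between ordered pairs and undirected edges, together with the two elementary facts that a $G$-invariant subset of $V\times V$ is a union of orbitals and that the symmetry of $A$ is inherited by its constituent orbitals. There is no genuine obstacle; the only point requiring care is to assemble the symmetric and non-symmetric orbitals into basic orbital graphs correctly, so that each arc of $A$ is counted once and none is omitted.
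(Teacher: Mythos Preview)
Your argument is correct and is the standard proof of this elementary fact. The paper itself does not supply a proof of this lemma; it simply cites \cite[Proposition~1.4.6]{dobson2022symmetry} and uses the result as a black box. What you have written is precisely the usual justification: pass to the arc set $A\subseteq V\times V$, observe it is $G$-invariant and hence a union of non-trivial orbitals, use the symmetry of $A$ to see this collection of orbitals is closed under $O\mapsto O^*$, and then package them into basic orbital graphs. There is nothing to compare against in the paper beyond the citation, and your proof stands on its own.
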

	
	A vertex-transitive graph $X= (V,E)$ is called a \itbf{Cayley graph} if there exists $H\leq \Aut(X)$ acting regularly on $V$, that is, for any $u, v \in V$, there exists a unique $h\in H$ such that $v = h(u)$. If $X$ is a Cayley graph, then $X$ is isomorphic to a graph $\operatorname{Cay}(H,C)$, where $C$ is an inverse-closed subset of $H\setminus \{1\}$, with vertex set $H$ and two elements $h$ and $h^\prime$ in $H$ are adjacent if and only if $h^\prime h^{-1} \in C$.
	
	Next, we recall an important correspondence between orbital graphs and the suborbits of a transitive subgroup of automorphisms. Recall that the \itbf{suborbits} of $G\leq \sym(V)$ are the orbits of $G_v$, for some $v\in V$. We will assume henceforth that $O_0,O_1,O_2,\ldots,O_d$ are the orbitals of $G\leq\ \sym(V)$, where $O_0$ is the trivial orbital. Fix $v\in V$. For any $0\leq i\leq d$, there exists $w_i\in V$ such that $O_i$ is equal to the orbital of $G$ containing $(v,w_i)$. By transitivity of $G$ on $O_i$, it is clear that $G_v$ is also transitive on the set $\Delta_i=\{w\in V:(v,w)\in O_i\}$. Therefore, there is a one-to-one correspondence between the orbitals of $G$ and the orbits of $G_v$, that is, the suborbits of $G$ with respect to $v$. In particular, for any $0\leq i\leq d$ the orbital $O_i$ of $G$ that contains $(v,w_i)$, corresponds to the suborbit $\Delta_i$ containing $w_i$. We say that a suborbit of $G$ is symmetric if the corresponding orbital is symmetric, otherwise it is called non-symmetric. Hence, we also define the graph $X_{\Delta_i} := X_{O_i}$ for $1\leq i\leq d$.

	The following result gives a characterization of arc-transitive digraphs.
	\begin{lem}\cite[Theorem~3.2.8]{dobson2022symmetry}
		Let $X=(V,E)$ be a graph, $G\leq \Aut(X)$ be transitive and $v\in V$. The following statements are equivalent.
		\begin{enumerate}[(i)]
			\item $X$ is an arc-transitive graph,
			\item the neighbourhood $N_X(v)$ of $v$ in $X$ is a symmetric suborbit of $G$,
			\item $X$ is an orbital graph of $G$.
		\end{enumerate}\label{lem:suborbits}
	\end{lem}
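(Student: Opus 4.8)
The plan is to prove the three equivalences by establishing the cyclic chain of implications (i) $\Rightarrow$ (iii) $\Rightarrow$ (ii) $\Rightarrow$ (i). Throughout I would read the arc-transitivity in (i) as the statement that $G$ itself acts transitively on the arcs of $X$ (ordered pairs $(u,w)$ with $\{u,w\}\in E$); since $G\leq \Aut(X)$, this is the version that makes all three $G$-relative conditions equivalent, and it immediately implies that $X$ is arc-transitive in the full sense. I would use freely the one-to-one correspondence between the orbitals $O_i$ of $G$ and the suborbits $\Delta_i$ of $G_v$ recalled just before the lemma, together with the fact that every $g\in G$ preserves $E$ and hence maps the neighbourhood of a vertex onto the neighbourhood of its image.

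For (i) $\Rightarrow$ (iii): I would fix an arc $(v,w)$ with $w\in N_X(v)$ and let $O$ be the orbital of $G$ containing $(v,w)$. Arc-transitivity forces the entire arc set of $X$ to be the single $G$-orbit $O$, so that $X=X_O$. Because $E$ is undirected, the reversed pair $(w,v)$ is also an arc, and arc-transitivity supplies $g\in G$ with $g(v,w)=(w,v)$; hence $(w,v)\in O$, so $O=O^*$ is symmetric and $X_O$ is genuinely an undirected orbital graph. For (iii) $\Rightarrow$ (ii): if $X=X_O$ with $O$ symmetric, then $N_X(v)=\{u:(v,u)\in O\}=\Delta$ is exactly the suborbit corresponding to $O$, and $O=O^*$ makes $\Delta$ symmetric, so $N_X(v)$ is a single symmetric suborbit.

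For (ii) $\Rightarrow$ (i): suppose $N_X(v)$ is one symmetric suborbit $\Delta$. Given two arcs $(u_1,w_1)$ and $(u_2,w_2)$, I would use vertex-transitivity to choose $g_1,g_2\in G$ with $g_i(u_i)=v$; since each $g_i$ is an automorphism it carries neighbours to neighbours, so $g_i(w_i)\in N_X(v)=\Delta$. As $\Delta$ is a single $G_v$-orbit, there is $h\in G_v$ with $h(g_1(w_1))=g_2(w_2)$, and then $g_2^{-1}hg_1$ sends $(u_1,w_1)$ to $(u_2,w_2)$. This proves $G$, and a fortiori $\Aut(X)$, is transitive on arcs.

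The hard part is the symmetry (self-pairing) condition: arc-transitivity alone would only say the arc set is one orbital $O$, and it is precisely the undirectedness of $E$ that lets me conclude $O=O^*$ in (i) $\Rightarrow$ (iii). This is also what excludes the non-symmetric case $X_O\cup X_{O^*}$ from (iii): there $G_v$ keeps the out-neighbours $\Delta$ and in-neighbours $\Delta^*$ in distinct orbits, so no element of $G$ fixing $v$ can fuse an out-neighbour with an in-neighbour, and arc-transitivity fails. The only other point requiring care is to keep all three conditions relative to $G$ rather than to the full group $\Aut(X)$, since for a proper vertex-transitive subgroup $G\lneq\Aut(X)$ the neighbourhood $N_X(v)$ may split into several $G_v$-orbits even when $X$ is arc-transitive under $\Aut(X)$, which would break (i) $\Rightarrow$ (ii) under the looser reading.
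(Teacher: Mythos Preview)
The paper does not supply its own proof of this lemma; it is simply quoted from \cite{dobson2022symmetry}, so there is nothing to compare your argument against. Your cyclic proof (i)$\Rightarrow$(iii)$\Rightarrow$(ii)$\Rightarrow$(i) is correct and is exactly the standard textbook argument.

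Your observation about the reading of condition~(i) is well taken and worth keeping: as the paper defines ``arc-transitive'' via the full automorphism group $\Aut(X)$, the literal statement of the lemma is false for an arbitrary transitive $G\leq\Aut(X)$ (your counterexample of a regular cyclic subgroup on $K_n$ is the right one). The equivalence only holds when (i) is read as ``$G$ acts transitively on the arcs of $X$'', and this is precisely how the paper applies the lemma later, namely with $G$ the $1$-arc-regular subgroup. So your adjustment is not just pedantic but necessary, and it matches the intended use.
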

	
	\subsection{Cubic arc-transitive graphs}
	Let $X = (V,E)$ be a cubic arc-transitive graph of type $\{1,2^1\}$ with $G^* =\Aut(X)$. By Lemma~\ref{lem:suborbits}, the graph $X$ is isomorphic to an orbital graph $X_\Delta$ of any transitive subgroup of $G^*$, where $\Delta$ is a symmetric suborbit of size $3$. Fix $v\in V$ and let $\Delta = N_X(v) = \{u_1,u_2,u_3\}$. Since $G$ is a transitive subgroup of $G^*$, we know that $G_v$ acts transitively on $\Delta$. Using the fact that $G$ is $1$-arc regular, the subgroup $G_v$ acts regularly on the set of arcs $\{ (v, u_1),(v, u_2),(v, u_3) \}$. Therefore, $G_v$ must be a cyclic group of order $3$. We assume that $G_v = \langle h\rangle$, for some $h\in G$ of order $3$. By the orbit-stabilizer lemma, we know that $|V| = \frac{|G|}{|G_v|} = \frac{q(q^2-1)}{2 (3)}$.
	As $\Delta$ is also a suborbit of $G^*$, the orbit-stabilizer lemma implies that 
	$|G^*_v| = \frac{|G^*|}{|V|} = 6$. 
	Since $G_v^*$ is transitive on $\Delta$, there exists an involution $\nv \in G_v^*$ such that $\nv h \nv = h^{-1}$, and $G_v^* = \langle  h, \nv \rangle$.

\subsection{Derangement graphs}
	
	Let $G\leq \sym(V)$ be a transitive group. The derangement graph is an important tool to determine the intersection density of transitive groups. The \itbf{derangement graph} $\Gamma_{G}$ is the graph with vertex set $G$, and two elements $g$ and $h$ are adjacent if $hg^{-1}$ is a derangement (that is, a fixed-point-free permutation). An important property of the stabilizers of $G$ is that if $u,v\in V$ and $g\in G$ such that $g(u)=v$, then $G_u = g^{-1}G_vg$. Therefore, $g$ and $h$ are adjacent in $\Gamma_{G}$ if and only if $hg^{-1}$ is not conjugate to any element in $G_v$, for any fixed $v\in V$.
	
	If $D_G$ is the set of derangements of $G$, then it is not hard to see that $\Gamma_{G} = \operatorname{Cay}(G,D_G)$. The derangement graph $\Gamma_{G}$ is defined in a way that $\mathcal{F}$ is intersecting with respect to the action of $G$ on $V$ if and only if $\mathcal{F}$ is a coclique of $\Gamma_{G}$. Therefore, one can extend the definition of the intersection density for $G$ acting on $V$ as follows
	\begin{align*}
		\rho(G) = \frac{\alpha(\Gamma_{G})}{|G|/|V|}.
	\end{align*}

\section{Properties of the linear groups $\psl{2}{q}$ and $\pgl{2}{q}$}
\label{sect:maximal-subgroups}
	
In this section, we determine the properties of $\psl{2}{q}$ and $\pgl{2}{q}$ that we will need. This includes the structure of the cyclic subgroups of $\psl{2}{q}$. Along the way, we will give the normalizer and the point-stabilizers of these elements.  We start with some remarks about the elements of $\psl{2}{q}$ and $\pgl{2}{q}$. We will assume $q$ is an odd prime power. 
	
	\subsection{Properties of elements}
	Any element of $\psl{2}{q}$ is conjugate to an element of a certain cyclic subgroup. In particular, if $g\in \psl{2}{q}$ is of order $k>2$, then $g$ is conjugate to an element of a subgroup isomorphic to: $\mathbb{Z}_{\frac{q-1}{2}}$, if $k \mid \frac{q-1}{2}$; $\mathbb{Z}_{\frac{q+1}{2}}$, if $k\mid \frac{q+1}{2}$; and $\mathbb{Z}_{p}$, if $k\mid q$. An involution of $\psl{2}{q}$ is conjugate to an element in  $\mathbb{Z}_{\tfrac{q-1}{2}}$ if $q\equiv 1 \pmod 4$, and to an element in $\mathbb{Z}_{\tfrac{q+1}{2}}$ if $q\equiv 3 \pmod 4$. The normalizers of the cyclic subgroups of $\psl{2}{q}$ are given in the next table; the table for $\pgl{2}{q}$ is given following some comments.
	
	\begin{table}[H]
		\centering
		\begin{tabular}{|c|c|c|c|c|}
			\hline
			Congruence of $q$& $o(g) = 2$& $o(g) \mid \frac{q-1}{2}$ & $o(g) \mid \frac{q+1}{2}$& $o(g) \mid q$\\ \hline \hline
			$q\equiv 1\pmod 4$& $\dih{q-1}$& $\dih{q-1}$ & $\dih{q+1}$& $\mathbb{Z}_p^k: \mathbb{Z}_{\frac{p-1}{2}}$\\ \hline
			$q\equiv 3\pmod 4$& $\dih{q+1}$& $\dih{q-1}$ & $\dih{q+1}$& $\mathbb{Z}_p^k: \mathbb{Z}_{\frac{p-1}{2}}$\\ \hline
		\end{tabular}
		\caption{The normalizer $\operatorname{N}_G(\langle g\rangle)$ of $\langle g\rangle$ in $G = \psl{2}{q}$, when $q$ is an odd prime power.}\label{tab:normalizer-psl}
	\end{table}
	
	Similar to the case for $\psl{2}{q}$, an element $g \in \pgl{2}{q}$ of order $k>2$ is conjugate to an element in a cyclic group isomorphic to: $\mathbb{Z}_{q-1}$, if $k \mid (q-1)$; $\mathbb{Z}_{q+1}$, if $k\mid (q+1)$; and $\mathbb{Z}_{p}$ if $k\mid q$. One main difference between $\psl{2}{q}$ and $\pgl{2}{q}$ is the structure of involutions. For $\psl{2}{q}$ there is a unique conjugacy class of involutions, these involutions are contained in either $D_{q-1}$ or $D_{q+1}$, depending on the congruence of $q$ modulo $4$. For $\pgl{2}{q}$, there are two conjugacy classes of involutions if $q$ is odd (but only one if $q$ is even). For $\pgl{2}{q}$, the normalizer of cyclic subgroups are given below. 
	\begin{table}[H]
		\centering
		\begin{tabular}{|c|c|c|c|c|c|}
			\hline
	Congruence & $o(g) = 2$, & $o(g) = 2$  & $o(g) \mid (q-1)$ & $o(g) \mid (q+1)$ & $o(g) = p$\\ 
	 of $q$   &  $g \in \psl{2}{q}$ &  $g \not \in \psl{2}{q}$ &   &  &  \\ \hline \hline
	$q\equiv 1\pmod 4$ & $\dih{2(q-1)}$ & $\dih{2(q+1)}$ & $\dih{2(q-1)}$ & $\dih{2(q+1)}$  & $\mathbb{Z}_p^k:\mathbb{Z}_{p-1}$ \\ \hline
	$q\equiv 3\pmod 4$ & $\dih{2(q+1)}$ & $\dih{2(q-1)}$ & $\dih{2(q-1)}$ & $\dih{2(q+1)}$  & $\mathbb{Z}_p^k:\mathbb{Z}_{p-1}$ \\ \hline
		\end{tabular}
		\caption{The normalizer $\operatorname{N}_{G^*}(\langle g\rangle)$ of $\langle g\rangle$ in $G^* = \pgl{2}{q}$, when $q$ is an odd prime power.}\label{tab:normalizer-pgl}
	\end{table}

\subsection{Dihedral subgroups of order $6$} 
	
	As the point-stabilizers of the full automorphism groups that we study in this paper are isomorphic to $\dih{6}\cong \sym(3)$, we need to recall some facts about the conjugacy classes of these subgroups in $\pgl{2}{q}$.
	
	\begin{lem}\cite{cameron20063}
		Let $q = p^k$ be odd. The number of conjugacy classes of subgroups of $\pgl{2}{q}$ isomorphic to the dihedral group $\dih{6}$ are as follows.
		\begin{enumerate}[(i)]
			\item If $p\neq 3$, then there are two conjugacy classes of subgroups isomorphic to $\dih{6}$. In particular, one class lies in $\psl{2}{q}$ and the other does not.
			\item If $q=3^k$, then there is a unique conjugacy class of subgroups isomorphic to $\dih{6}$. If $k$ is even, this class lies in $\psl{2}{q}$, and if $k$ is odd, then it does not.
		\end{enumerate}\label{lem:conj-class-3}
	\end{lem}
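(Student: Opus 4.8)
The plan is to classify the subgroups $H\cong\dih 6\cong\sym(3)$ of $G^*=\pgl{2}{q}$ up to conjugacy by using that $H$ has a unique, hence characteristic, Sylow $3$-subgroup $C\cong\mathbb{Z}_3$, normal of index $2$ in $H$. The first step is to show that all order-$3$ subgroups of $G^*$ are conjugate. If $p\neq 3$, then exactly one of $q-1,q+1$ is divisible by $3$, and by the element-conjugacy facts in Section~\ref{sect:maximal-subgroups} every order-$3$ element lies in a conjugate of the corresponding maximal torus $\mathbb{Z}_{q-1}$ or $\mathbb{Z}_{q+1}$, which has a unique subgroup of order $3$; as these tori form one conjugacy class, so do their order-$3$ subgroups. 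If $q=3^k$, Table~\ref{tab:normalizer-pgl} gives $|N_{G^*}(C)|=q(p-1)=2q$, so the class of $C$ has size $(q^2-1)/2$; since the $q+1$ Sylow $3$-subgroups $\mathbb{Z}_3^k$ of $G^*$ intersect pairwise trivially (a nontrivial unipotent of $\pgl{2}{q}$ fixes a unique point of the projective line), $G^*$ has exactly $(q^2-1)/2$ order-$3$ subgroups, so they are all conjugate. Consequently every $\dih 6\leq G^*$ is conjugate to one containing a fixed $C$; and if $g\in G^*$ conjugates one such copy to another, it must fix the common Sylow $3$-subgroup $C$, so $g\in N:=N_{G^*}(C)$. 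Hence it suffices to classify, up to $N$-conjugacy, the subgroups $\langle C,t\rangle\cong\dih 6$ of $N$ with $t$ an involution of $N$ outside $C_{G^*}(C)$ (such $t$ automatically inverts $C$, since $N/C_{G^*}(C)\hookrightarrow\Aut(C)\cong\mathbb{Z}_2$); two of these coincide exactly when the involutions differ by an element of $C$. Finally, $C$ has odd order and $[G^*:\psl{2}{q}]=2$, so $C\leq\psl{2}{q}$ always, and $\langle C,t\rangle\leq\psl{2}{q}$ iff $t\in\psl{2}{q}$.

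Suppose $p\neq 3$. By Table~\ref{tab:normalizer-pgl}, $N=\dih{2m}=\langle r,s\mid r^m=s^2=1,\ srs=r^{-1}\rangle$ where $m$ is whichever of $q-1,q+1$ is divisible by $3$; since $q$ is odd, $m$ is even, hence $6\mid m$, and $C=\langle r^{m/3}\rangle$ with $C_{G^*}(C)=\langle r\rangle\cong\mathbb{Z}_m$. The involutions of $N$ outside $\langle r\rangle$ are the $m$ reflections $r^is$, which (as $m$ is even) split into two $N$-conjugacy classes according to the parity of $i$. The key observation is that $\langle C,r^is\rangle\cong\dih 6$ contains exactly the reflections with indices $i,\,i+m/3,\,i+2m/3$, all of the same parity because $m/3$ is even; conversely, conjugating by an appropriate power of $r$ shows any two of these subgroups whose reflections share a parity are $N$-conjugate. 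So $G^*$ has exactly two conjugacy classes of $\dih 6$, represented by $\langle C,s\rangle$ and $\langle C,rs\rangle$. Moreover $\langle r\rangle\cap\psl{2}{q}$ has index $2$ in $\langle r\rangle$ (it is a maximal torus of $\psl{2}{q}$), so exactly $m/2$ of the reflections — those of one fixed parity — lie in $\psl{2}{q}$; therefore exactly one of the two classes is contained in $\psl{2}{q}$.

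Suppose $q=3^k$. By Table~\ref{tab:normalizer-pgl}, $N=A\rtimes\langle\tau\rangle$ is generalized dihedral, with $A=C_{G^*}(C)\cong\mathbb{Z}_3^k$ the unipotent radical (which contains $C$), $\tau^2=1$, and $\tau$ inverting $A$; concretely one may take $\tau$ to be the image of $\operatorname{diag}(-1,1)$. Every involution of $N$ outside $A$ is then of the form $a\tau$ ($a\in A$), so every $\dih 6\ni C$ equals $\langle C,a\tau\rangle$ for some $a$, and $\langle C,a\tau\rangle=\langle C,a'\tau\rangle$ iff $Ca=Ca'$. Conjugating $a\tau$ by $v\in A$ gives $v(a\tau)v^{-1}=v^2a\tau$; since squaring is a bijection of the odd-order group $A$, for any $a'$ we can choose $v$ with $v^2=a'a^{-1}$, and as $v\in A$ centralizes $C$ this makes $\langle C,a\tau\rangle$ and $\langle C,a'\tau\rangle$ conjugate. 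Hence $G^*$ has a single conjugacy class of $\dih 6$, which lies in $\psl{2}{q}$ iff $\langle C,\tau\rangle\leq\psl{2}{q}$ iff $\tau\in\psl{2}{q}$ iff $-1=\det\operatorname{diag}(-1,1)$ is a square in $\mathbb{F}_q$ iff $q\equiv 1\pmod 4$; since $q=3^k$ and $3\equiv-1\pmod 4$, this is exactly when $k$ is even.

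The main obstacle is the bookkeeping when $p\neq 3$: one must pin down that the $\dih 6$-subgroups containing a fixed $\mathbb{Z}_3$ fall into exactly two conjugacy classes — not one, not three — which is exactly what $6\mid m$ (making the shift $m/3$ even) ensures, and one must correctly match these classes against $\psl{2}{q}$ via the index-$2$ torus $\langle r\rangle\cap\psl{2}{q}$ recorded in Table~\ref{tab:normalizer-psl}. The case $q=3^k$ is comparatively routine once $N$ is seen to be generalized dihedral and one notes that doubling is a bijection on the $3$-group $A$.
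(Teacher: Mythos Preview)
The paper does not prove this lemma; it is simply cited from \cite{cameron20063} without argument. Your proposal therefore supplies a proof where the paper gives none.

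Your argument is correct. The reduction via the characteristic Sylow $3$-subgroup $C$ is the natural approach: once you know all order-$3$ subgroups of $G^*$ are conjugate (which you verify separately for $p\neq 3$ via the tori and for $p=3$ by a counting argument using Table~\ref{tab:normalizer-pgl}), classifying copies of $\dih 6$ up to $G^*$-conjugacy reduces to classifying them up to $N_{G^*}(C)$-conjugacy among those containing the fixed $C$. For $p\neq 3$ the key arithmetic fact is that $6\mid m$ forces $m/3$ to be even, so the three reflections in any $\langle C,r^is\rangle$ share a parity, yielding exactly two classes; your identification of which class lies in $\psl{2}{q}$ via the index-$2$ torus $\langle r\rangle\cap\psl{2}{q}=\langle r^2\rangle$ is also correct. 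For $q=3^k$ your computation that $N_{G^*}(C)\cong\mathbb{Z}_3^k\rtimes\langle\tau\rangle$ is generalized dihedral (with $\tau=\overline{\operatorname{diag}(-1,1)}$ inverting the unipotent radical) is right, and the transitivity argument via squaring in the odd-order group $A$ is clean. The final parity check $3^k\equiv(-1)^k\pmod 4$ correctly determines when $\tau\in\psl{2}{q}$.

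One small point worth making explicit in the $p\neq3$ case: you use that $C_{G^*}(C)=\langle r\rangle$ is exactly the torus, not larger. This follows because a semisimple element of order $3$ has distinct eigenvalues, so its centralizer in $\gl{2}{q}$ is a maximal torus; you rely on this when asserting that the involutions of $N$ outside $C_{G^*}(C)$ are precisely the reflections.
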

	
		If $q$ is odd and not a power of $3$, then there are two conjugacy classes of subgroups isomorphic to $\dih{6} \cong \sym(3)$, so we let $H\leq \psl{2}{q}$ and $H^\prime \not \leq \psl{2}{q}$ be representatives of these two conjugacy classes. Since neither of $H$ or $H^\prime$ contain $\psl{2}{q}$, $H$ and $H^\prime$ are core-free subgroups. Consequently, the actions of $\pgl{2}{q}$ on cosets of $H$ and $H^\prime$ by multiplication are faithful, and thus correspond to transitive permutation groups.  
	
	Let $V^\prime$ be the set of cosets of $H^\prime$ in $\pgl{2}{q}$, we claim that the action of $\psl{2}{q}$ on $V^\prime$ is transitive. To see this, let $\nv \in \pgl{2}{q}\setminus \psl{2}{q}$ be an involution. Then, $\pgl{2}{q} = \psl{2}{q}\rtimes \langle \nv \rangle$. If $xH^\prime$ and $yH^\prime$ are two cosets in $V^\prime$, then there exist $x^\prime, y^\prime\in \psl{2}{q}$ such that $x = x^\prime \nv$ and $y  = y^\prime \nv$. Then, we have
	\begin{align*}
		(y^\prime  (x^\prime)^{-1}) (xH^\prime) &= (y^\prime  (x^\prime)^{-1}) (x^\prime \nv H^\prime)=y^\prime \nv H^\prime = yH^\prime .
	\end{align*}
	
	Let $V$ be the set of cosets of $H$ in $\pgl{2}{q}$, we claim that the action of $\psl{2}{q}$ on $V$ is intransitive. Fix $v\in V$ and let $g\in \pgl{2}{q}\setminus \psl{2}{q}$ such that $u = g(v)$. Then assume that there is a $g^\prime \in \psl{2}{q}$, such that $u = g^\prime (v)$. This implies $g^{-1}g^\prime (v) = v$, and $g^{-1}g^\prime$ is conjugate to an element in $H$, so conjugate to an element in $\psl{2}{q}$. But since $g^{-1}g^\prime \in \pgl{2}{q}\setminus \psl{2}{q}$, this is not possible and no such $g^\prime$ exists in $\psl{2}{q}$. 
	
	Consequently, we conclude that if $q$ is not a power of 3 and $\psl{2}{q}$ acts transitively on the cosets of a subgroup of $\pgl{2}{q}$ isomorphic to $\dih{6}$, then the subgroup is conjugate to $H^\prime$. In particular, the point stabilizers of this action are conjugate to $H^\prime$.
			
	Next consider $q=3^k$ for some integer $k\geq 1$, by Lemma~\ref{lem:conj-class-3} in this case there is a unique conjugacy class of subgroups of isomorphic to $\sym(3)$. If $k$ is even, then a copy of $\sym(3)$ given by the normalizer of an element of order $3$ in $\psl{2}{q}$ lies in $\psl{2}{q}$. Consequently, using the same argument before, $\psl{2}{q}$ cannot be transitive on $V$ in this case. If $k$ is odd, no subgroup of $\pgl{2}{q}$ isomorphic to $\sym(3)$ lies in $\psl{2}{q}$. Similar to the case where $q$ is not a power of $3$, the subgroup $\psl{2}{q}$ is transitive.
	
	We summarize what we showed in this section, using the fact that Hypothesis~\ref{hyp} implies the action of $\psl{2}{q}$ is transitive.
	
	\begin{lem}
		Let $X$ be a cubic arc-transitive graph of type $\{1,2^1\}$ satisfying Hypothesis~\ref{hyp} with $G^* = \pgl{2}{q}$ and $G = \psl{2}{q}$, then no vertex-stabilizer of $\pgl{2}{q}$ can lie in $\psl{2}{q}$. In particular, if $q = 3^k$, then $k$ is odd.\label{lem:psl-transitive}
	\end{lem}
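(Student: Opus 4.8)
The statement is essentially a repackaging of the discussion carried out in this section, so the plan is to assemble the pieces into one short contradiction argument. The key structural input is Lemma~\ref{lem:suborbits}: since $X$ is arc-transitive, it is an orbital graph of $G^* = \pgl{2}{q}$, so we may identify $V$ with the coset space $G^*/G_v^*$ on which $G^*$ acts by left translation, and from the analysis of cubic arc-transitive graphs of type $\{1,2^1\}$ we know $G_v^* \cong \dih{6} \cong \sym(3)$. The only general fact I need is the standard criterion that a subgroup $G \leq G^*$ is transitive on the coset space $G^*/G_v^*$ if and only if $G\, G_v^* = G^*$.

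First I would argue by contradiction: suppose some vertex-stabilizer $G_v^*$ (equivalently, by conjugacy, every vertex-stabilizer) is contained in $G = \psl{2}{q}$. Then $G\, G_v^* = G = \psl{2}{q}$, which is a proper subgroup of $\pgl{2}{q}$ because $[\pgl{2}{q}:\psl{2}{q}] = 2$; hence $G$ fails to be transitive on $V$, contradicting Hypothesis~\ref{hyp}, under which $G = \psl{2}{q}$ is a $1$-arc-regular (in particular vertex-transitive) subgroup of $G^*$. Equivalently, one can run the concrete version already given in the section: pick $g \in \pgl{2}{q}\setminus\psl{2}{q}$ and set $u = g(v)$; if some $g^\prime \in \psl{2}{q}$ satisfied $g^\prime(v) = u$, then $g^{-1}g^\prime \in G_v^* \leq \psl{2}{q}$, forcing $g \in \psl{2}{q}$, a contradiction. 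Either way, no vertex-stabilizer of $\pgl{2}{q}$ lies in $\psl{2}{q}$, which is the first assertion.

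For the second assertion, when $q = 3^k$ I would invoke Lemma~\ref{lem:conj-class-3}(ii): there is a single conjugacy class of $\sym(3)$-subgroups of $\pgl{2}{3^k}$, and this class is contained in $\psl{2}{3^k}$ exactly when $k$ is even. Since we have just shown $G_v^* \cong \sym(3)$ is not contained in $\psl{2}{q}$, the case $k$ even is excluded, so $k$ must be odd. I do not expect a genuine obstacle here; the only points requiring care are the translation between "$X$ is an orbital graph / $V$ is a coset space" and the product criterion $G\, G_v^* = G^*$, and correctly citing that $G_v^* \cong \sym(3)$ is exactly the type-$\{1,2^1\}$ vertex-stabilizer structure established earlier in the subsection. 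Everything else is bookkeeping with Lemma~\ref{lem:conj-class-3} and the index $[\pgl{2}{q}:\psl{2}{q}] = 2$.
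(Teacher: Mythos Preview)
Your proposal is correct and follows essentially the same approach as the paper. The paper's proof is precisely the discussion preceding the lemma: it runs the concrete contradiction argument you describe (pick $g \in \pgl{2}{q}\setminus\psl{2}{q}$, set $u = g(v)$, and derive $g \in \psl{2}{q}$ from $g^{-1}g' \in G_v^* \subset \psl{2}{q}$), and then invokes Lemma~\ref{lem:conj-class-3}(ii) for the $q = 3^k$ case exactly as you do; your additional formulation via the product criterion $G\,G_v^* = G^*$ is just a clean repackaging of the same idea.
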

	
	\subsection{The elements of order $3$}
	Let us determine the elements of order $3$ of $\psl{2}{q}$, where $q = p^k$ for some odd prime number $p$ and some integer $k\geq 1$. Since $\psl{2}{q} = \sln{2}{q}/\{\pm I\}$, an element of order $3$ of $\psl{2}{q}$ is of the form $\overline{A}$ such that $A\in \sln{2}{q}$ and $A^3=\pm I$.
	Let $\chi(t)$ be the characteristic polynomial of $A$. 
	
	First assume that $p\neq 3$. Since $A^3 = \pm I$ we have 2 cases. When $A^3 = I$, the polynomial $t^3-1 = (t-1)(t^2+t+1)$ is annihilated by $A$. Hence, the minimal polynomial of $A$ divides $(t-1)(t^2+t+1)$. It is clear that the minimal polynomial cannot be of degree $1$, otherwise $A$ would be the identity. Since $A \neq I$, its characteristic polynomial, $\chi(t)$, which has degree 2 is therefore equal to its minimal polynomial. As $A\in \sln{2}{q}$, the constant term of $\chi(t)$ is equal to $1$, and  therefore $\chi(t) = t^2+t+1$ and so $A$ is similar to the matrix
	\begin{align*}
		\begin{bmatrix}
			0 & -1\\
			1 & -1
		\end{bmatrix}.
	\end{align*} 
	Consequently, $\operatorname{Tr}(A) = -1$ and $\det(A) = 1$. Next consider the case where $A^3 = -I$, then $t^6-1 = (t-1)(t+1)(t^2+t+1)(t^2-t+1)$ is annihilated by $A$. Again, the minimal polynomial of $A$ cannot have degree $1$, so it must coincide with $\chi(t)$. Since $A\in \sln{2}{q}$, the constant term of $\chi(t)$ is equal to $1$, and we must have that its characteristic polynomial $\chi(t)\in \{ t^2-1,t^2+t+1,t^2-t+1 \}$. Clearly, $\chi(t)\neq t^2-1$ since the order of $A$ is $3$. We conclude that $\chi(t) = t^2+t+1$ or $\chi(t) = t^2-t+1$, in either case we have $\operatorname{Tr}(A) = \pm 1.$ 
	
	Now, assume that $p = 3$, and again we consider two cases. If $A^3 = I$, then $t^3-1 = (t-1)^3$ is annihilated by $A$. This implies $\chi(t) = (t-1)^2 = t^2+t+1$. If $A^3 = -I$, then $t^3+1 = (t+1)^3$ is annihilated by $A$, so $\chi(t) = (t+1)^2 = t^2-t+1$. In either case, we conclude that $\operatorname{Tr}(A) = \pm 1$.
	
	Hence, we have shown that any element $\overline{A} \in \psl{2}{q}$ of order $3$ must have the property that $\operatorname{Tr}(A) = \pm 1$. It is an easy exercise to show that the only representative of conjugacy classes of $\sln{2}{q}$ with traces $\pm 1$ are elements $A$ such that $A^3 = \pm I$. Therefore, the converse of the earlier result also holds. We summarize these results in the next lemma.
	\begin{lem}
		An element $\overline{A} \in \psl{2}{q}$ has order $3$ if and only if $\operatorname{Tr}(A) = \pm 1$.
	\end{lem}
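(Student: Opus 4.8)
The claim is the elementary characterization: $\overline{A}\in\psl{2}{q}$ has order $3$ if and only if $\operatorname{Tr}(A)=\pm 1$. The forward direction is precisely what the preceding discussion established, by a case analysis on whether $A^3=I$ or $A^3=-I$ and whether $p=3$, examining the admissible characteristic polynomials of degree $2$ with constant term $1$. So the plan is to record the forward direction as already done, and then to supply the converse, which is what remains.

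For the converse, suppose $A\in\sln{2}{q}$ with $\operatorname{Tr}(A)=\varepsilon\in\{1,-1\}$ and $\det(A)=1$. By the Cayley--Hamilton theorem, $A^2-\varepsilon A+I=0$, i.e.\ $A^2=\varepsilon A-I$. From this one computes $A^3$ directly: $A^3 = A\cdot A^2 = A(\varepsilon A - I) = \varepsilon A^2 - A = \varepsilon(\varepsilon A-I)-A = \varepsilon^2 A - \varepsilon I - A = (\varepsilon^2-1)A - \varepsilon I = -\varepsilon I$, since $\varepsilon^2=1$. Hence $A^3 = \mp I \in \{\pm I\}$, so $\overline{A}^3$ is the identity of $\psl{2}{q}$, and therefore $\overline{A}$ has order dividing $3$. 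It remains only to rule out $\overline{A}=\overline{I}$, i.e.\ $A\in\{\pm I\}$: but $\operatorname{Tr}(\pm I)=\pm 2$, and since $p$ is odd we have $\pm 2\neq\pm 1$ in $\mathbb{F}_q$ (one checks $2\neq 1$ and $2\neq -1$ both fail only in characteristic $3$ and $2$ respectively, and $p$ is odd so $2\neq 1$; the equality $2=-1$ would force $p=3$, in which case $\operatorname{Tr}(A)=-1=2$ is exactly the value handled, and there $A=I$ is impossible since $\operatorname{Tr}(I)=2=-1$ would again be consistent — so one must instead observe that if $A=\pm I$ then $A$ is central and $\overline A$ is trivial, contradicting that a trace-$\pm1$ element with $A\neq\pm I$ exists; cleaner: the minimal polynomial of $\pm I$ is linear, whereas $t^2-\varepsilon t+1$ is the characteristic polynomial, and for $A=\pm I$ this would have to equal $(t\mp1)^2=t^2\mp 2t+1$, forcing $\mp 2=-\varepsilon$, i.e.\ $\varepsilon=\pm 2$, impossible for $\varepsilon=\pm1$ when $p$ is odd and $p\neq 3$; for $p=3$, $-1=2$ so one argues separately that a Jordan block $\left[\begin{smallmatrix}1&1\\0&1\end{smallmatrix}\right]$ has order $p=3$, not $1$).

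The cleanest route, which I would actually write, avoids splitting on the characteristic: an element $A\in\sln{2}{q}$ with $A\neq\pm I$ satisfies $\overline{A}^3=\overline{I}$ by the computation above, and $\overline{A}\neq\overline{I}$ by definition of $A\neq\pm I$, so $\overline{A}$ has order exactly $3$; conversely if $A=\pm I$ then $\operatorname{Tr}(A)=\pm 2\neq\pm 1$ because the map $x\mapsto x$ on $\{2,-2\}$ and $\{1,-1\}$ gives disjoint sets in $\mathbb{F}_q$ precisely when $q$ is odd and $q\neq 3^k$ — and for $q=3^k$ one notes $\pm I$ has trace $\pm 2=\mp 1$, but then checks that the \emph{only} matrices of trace $\pm 1$ with determinant $1$ in characteristic $3$ are the non-identity unipotents (since $t^2\mp t+1=(t\mp 1)^2$ forces $A$ conjugate to a single Jordan block, not $\pm I$), which do have order $3$.

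The main obstacle is purely bookkeeping in characteristic $3$, where $2\equiv -1$ collapses the trace values and one cannot distinguish $\pm I$ from order-$3$ elements by trace alone without looking at the minimal polynomial: there, $A$ of trace $\pm 1$ has characteristic polynomial $(t\mp 1)^2$, hence is either $\pm I$ (excluded since then the polynomial would be linear, contradicting $\deg\chi=\deg(\text{min poly})=2$ forced by $A$ not scalar — wait, that is circular) or a genuine Jordan block of order $p=3$. The honest fix is: classify conjugacy classes of $\sln{2}{q}$ by rational canonical form; the trace-$\pm 1$ classes are exactly $\left[\begin{smallmatrix}0&-1\\1&\mp 1\end{smallmatrix}\right]$, and a direct matrix computation (using $A^3=\mp I$ from Cayley--Hamilton as above) shows each has order $3$ in $\psl{2}{q}$. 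This is the "easy exercise" the authors cite, and I would present exactly that: invoke Cayley--Hamilton to get $A^3=\mp I$, conclude order divides $3$, and rule out order $1$ by noting a scalar matrix has trace $\pm 2$, which differs from $\pm 1$ since the only coincidence $2=-1$ occurs when $p=3$, and in that case the trace-$(-1)$ scalar is $-I$ whose cube is $-I$ but which is \emph{not} of the canonical form above — so the representatives genuinely have order $3$.
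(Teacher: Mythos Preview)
Your Cayley--Hamilton computation is exactly the ``easy exercise'' the paper leaves to the reader, and it is the right way to do it: from $\operatorname{Tr}(A)=\varepsilon\in\{\pm 1\}$ and $\det A=1$ you get $A^2=\varepsilon A-I$ and hence $A^3=-\varepsilon I$, so $\overline{A}$ has order dividing $3$. The paper phrases the same step as ``the only representatives of conjugacy classes of $\sln{2}{q}$ with trace $\pm 1$ are elements $A$ with $A^3=\pm I$'', which is the identical fact viewed through the rational canonical form; your direct computation is at least as clean.

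Where your write-up derails is the attempt to exclude $A=\pm I$ in characteristic $3$. You are right to be uneasy: the lemma as literally stated is \emph{false} when $p=3$, since the identity $\overline{I}\in\psl{2}{3^k}$ has lifts $\pm I$ with trace $\pm 2=\mp 1\in\{\pm 1\}$ but order $1$. The paper does not close this gap either---its converse only yields $A^3=\pm I$, i.e.\ order dividing $3$, just as yours does---and it does not need to, because the lemma is only invoked later for $p\neq 3$. For $p\neq 3$ your one-line observation that $\{2,-2\}\cap\{1,-1\}=\varnothing$ in $\mathbb{F}_p$ immediately rules out $A=\pm I$ and completes the proof. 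Stop there. The paragraphs of self-correction (``wait, that is circular'', the multiple attempted rescues) are chasing a statement that genuinely does not hold in characteristic $3$; delete them and, if you wish, add a remark that the converse requires $p\neq 3$ or the side condition $\overline{A}\neq 1$.
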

	
	\subsection{The elements of order $2$ in $\pgl{2}{q}$}

	Next we determine the conjugacy classes of involutions of $\pgl{2}{q}$ not belonging to $\psl{2}{q}$. 

	\begin{lem}\label{lem:trace0}
		If $\overline{A} \in \pgl{2}{q} \setminus \psl{2}{q}$ is an involution, then $\operatorname{Tr}(A) = 0$.\label{lem:trace-involution}
	\end{lem}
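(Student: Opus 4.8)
The plan is to work at the level of $\sln{2}{q}$ and analyze which matrices $A \in \gl{2}{q}$ can represent an involution in $\pgl{2}{q}$, i.e., satisfy $A^2 = \lambda I$ for some $\lambda \in \mathbb{F}_q^\times$ with $A \notin \langle I \rangle$. First I would observe that an involution $\overline{A}$ in $\pgl{2}{q}$ lifts to $A \in \gl{2}{q}$ with $A^2 = \lambda I$; by scaling $A$ we may assume $\det(A) = 1$ (since $\pgl{2}{q} \cong \gl{2}{q}/Z$ and squares in $\mathbb{F}_q^\times$ are cosets, though the parity of whether $A$ lands in $\sln{2}{q}$ is exactly what distinguishes the two classes) — more carefully, I would consider the two cases $A^2 = I$ and $A^2 = -I$ at the level of $\sln{2}{q}$ as was done for order-$3$ elements in the preceding subsection. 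In the case $A^2 = I$ with $A \neq \pm I$, the minimal polynomial of $A$ divides $t^2 - 1 = (t-1)(t+1)$ and is not linear, so $\chi(t) = (t-1)(t+1) = t^2 - 1$, giving $\operatorname{Tr}(A) = 0$ but $\det(A) = -1$; such $A$ does not lie in $\sln{2}{q}$, and $\overline{A}$ is precisely an involution outside $\psl{2}{q}$ (consistent with Table~\ref{tab:normalizer-pgl} listing the two involution classes). In the case $A^2 = -I$ (so $A \in \sln{2}{q}$ after no rescaling needed since $\det$ is forced), the minimal polynomial divides $t^2 + 1$ and must equal $\chi(t) = t^2 + 1$, so again $\operatorname{Tr}(A) = 0$, but now $\det(A) = 1$ and $\overline{A} \in \psl{2}{q}$.

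The key remaining point is to identify which of these two situations corresponds to the hypothesis $\overline{A} \notin \psl{2}{q}$. I would argue that $\overline{A} \in \psl{2}{q}$ if and only if some (equivalently, any) $\gl{2}{q}$-lift $A$ has $\det(A)$ a square in $\mathbb{F}_q^\times$: indeed $\psl{2}{q}$ is the image in $\pgl{2}{q}$ of $\sln{2}{q}$, and an element of $\gl{2}{q}$ maps into $\sln{2}{q} Z / Z$ iff its determinant is a square. Rescaling an involution lift $A$ by $c \in \mathbb{F}_q^\times$ replaces $(A^2, \det A)$ by $(c^2 A^2, c^2 \det A)$, so the square class of $\det(A)$ is a well-defined invariant of $\overline{A}$, and it detects membership in $\psl{2}{q}$. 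Thus for $\overline{A} \in \pgl{2}{q} \setminus \psl{2}{q}$ I can always choose the normalized lift with $\det(A) = 1$, which forces $A^2 = -I$ — wait, that would put it in $\psl{2}{q}$, a contradiction — so instead the correct normalization outside $\psl{2}{q}$ is to a lift with $A^2 = I$, $\det(A) = -1$, and in all admissible normalizations $\operatorname{Tr}(A) = 0$ since trace scales the same way the vanishing of the trace is preserved. Either way, in every case the characteristic polynomial has zero linear coefficient, so $\operatorname{Tr}(A) = 0$.

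The main obstacle I anticipate is being careful about the normalization of the lift $A$: the statement ``$\operatorname{Tr}(A) = 0$'' is only meaningful once one fixes a convention for which representative $A \in \gl{2}{q}$ (or $\sln{2}{q}$) of the coset $\overline{A}$ is meant, and one must check that the vanishing of the trace is independent of this choice (which it is, since rescaling $A \mapsto cA$ sends $\operatorname{Tr}(A) \mapsto c\operatorname{Tr}(A)$, and $0$ is fixed). So the proof reduces to: (1) reduce to a matrix equation $A^2 = \lambda I$; (2) split into the subcases according to whether a normalized lift has $A^2 = I$ or $A^2 = -I$, exactly parallel to the order-$3$ analysis above; (3) in each subcase read off $\chi(t)$ and conclude $\operatorname{Tr}(A) = 0$; (4) remark that the hypothesis $\overline{A}\notin\psl{2}{q}$ is consistent with (indeed forces) the determinant-$(-1)$ normalization via the square-class argument, so that the lemma is nonvacuous and correctly stated. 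No step requires lengthy computation; it is essentially the same template as the preceding lemma on elements of order $3$.
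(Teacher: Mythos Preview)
Your approach is correct and genuinely different from the paper's. The paper simply invokes the known conjugacy-class structure: the involutions in $\pgl{2}{q}\setminus\psl{2}{q}$ form a single class with representative $\overline{\left[\begin{smallmatrix} 0 & 1 \\ 1 & 0 \end{smallmatrix}\right]}$, and since conjugation preserves the trace the result is immediate. Your route---lift to $A\in\gl{2}{q}$ with $A^2=\lambda I$ and read the trace off the characteristic polynomial---is more self-contained and in fact proves more (every involution of $\pgl{2}{q}$, inside $\psl{2}{q}$ or not, has trace-zero lifts).

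Two points to clean up. First, your dichotomy ``$A^2=I$ or $A^2=-I$'' after normalization is not exhaustive: when $q\equiv 1\pmod 4$ and $\overline{A}\notin\psl{2}{q}$, the scalar $\lambda$ lies in the non-square class of $\mathbb{F}_q^\times$ (since $\det A=-\lambda$ is a non-square and $-1$ is a square), and no rescaling of $A$ brings $\lambda$ to $\pm 1$. There is no need to normalize at all: for arbitrary $\lambda$, a non-scalar $A$ with $A^2=\lambda I$ has minimal polynomial $t^2-\lambda$, which then equals the characteristic polynomial, giving $\operatorname{Tr}(A)=0$ directly. Equivalently, Cayley--Hamilton yields $(\operatorname{Tr} A)\,A=(\lambda+\det A)I$, and $A$ non-scalar forces $\operatorname{Tr} A=0$. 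Second, your entire second paragraph---deciding which case lands outside $\psl{2}{q}$---is unnecessary, since $\operatorname{Tr}(A)=0$ already holds in every case; the hypothesis $\overline{A}\notin\psl{2}{q}$ plays no role in the argument and appears in the lemma only because that is the class the paper needs later.
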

	\begin{proof}
		We note that $\pgl{2}{q}$ has two conjugacy classes of involutions. The first class consists of the involutions in $\psl{2}{q}$ and the second one  those in $\pgl{2}{q} \setminus \psl{2}{q}$. 
		Every involution in $\pgl{2}{q} \setminus \psl{2}{q}$ is conjugate to the element
		\begin{align*}
			\overline{\begin{bmatrix} 				0 & 1\\ 				1 & 0 		\end{bmatrix}}.
		\end{align*}
		The result follows immediately by noting that the trace of the above element is $0$.
	\end{proof}

	\subsection{Transversals} 	
	
 In this section we build a subgroup of order $q+1$ in $\pgl{2}{q}$ and determine a transversal for this subgroup. To do this, consider an element $h$ with order 3 in $\pgl{2}{q}$, we can assume without loss of generality that
	\begin{align*}
		h =
		\overline{ \begin{bmatrix}
			0 & -1\\
			1 & -1
		\end{bmatrix} }.
	\end{align*}
Clearly, $\pgl{2}{q}$ acts transitively on the conjugacy class that includes $h$, acting by conjugation. The point-stabilizer of this action is $\operatorname{C}_{\pgl{2}{q}}(h)$. The centralizer of $h$, as matrix in $\gl{2}{q}$, is the set of matrices 
$ x = \begin{bmatrix} a & b\\ c & d \end{bmatrix}$ with 
	\begin{align*}
		\begin{bmatrix}
			0 & -1\\
			1 & -1
		\end{bmatrix}
		\begin{bmatrix}
			a & b\\
			c & d
		\end{bmatrix}
		=
		\begin{bmatrix}
			a & b\\
			c & d
		\end{bmatrix}
		\begin{bmatrix}
			0 & -1\\
			1 & -1
		\end{bmatrix},
	\end{align*}
	or, equivalently, with
	\begin{align*}
		\begin{bmatrix}
			-c & -d\\
			a-c & b-d
		\end{bmatrix}
		=
		\begin{bmatrix}
			b & -a-b\\
			d & -c-d
		\end{bmatrix}.
	\end{align*}
	Consequently, $b = -c$ and $d = a+b$. From these equations, we have the centralizer of $h$ in $\pgl{2}{q}$ is
	\begin{align*}
		\overline{x} = \overline{ \begin{bmatrix}
			a & b\\
			c & d
		\end{bmatrix}}
		=
		\overline{ \begin{bmatrix}
			a & b\\
			-b & a+b
		\end{bmatrix}}
		=
		\begin{cases}
			 \overline{  \begin{bmatrix}
				1 & \alpha\\
				-\alpha & 1+\alpha
			\end{bmatrix} }
			&
			\mbox{ for some $\alpha \in \mathbb{F}_{q}$ if }a\neq 0,\\[15pt]
			 \overline{  \begin{bmatrix}
				0 & 1\\
				-1 & 1
			\end{bmatrix} }
			&
			 a= 0 .
		\end{cases}
	\end{align*}
	We therefore conclude that the centralizer of $h$ in $\pgl{2}{q}$ is the subgroup
	\begin{align}
		S = \operatorname{C}_{\pgl{2}{q}}(h)
		= 
		\left\{
		 \overline{  \begin{bmatrix}
			1 & \alpha\\
			-\alpha & 1+\alpha
		\end{bmatrix} } 
		: 
		\alpha \in \mathbb{F}_{q}
		\right\}\cup \left\{ 
		     \overline{  \begin{bmatrix}
			0 & 1\\
			-1 & 1
		\end{bmatrix} }
		 \right\}\cong \mathbb{Z}_{q+1}.\label{eq:centralizer}
	\end{align}

In Lemma~\ref{lem:main} we will determine the suborbits of $S$ acting by conjugation on the conjugacy class that contains $h$.  In order to do this, we first need to identify some representatives of cosets of $S$. Define the subgroup 
	\begin{align}
		K = \left\{
		 \overline{  \begin{bmatrix}
			1 & a\\
			0 & b
		\end{bmatrix} }
		:
		a\in \mathbb{F}_{q} \mbox{ and } b\in \mathbb{F}_{q}^* \right\} \cong \mathbb{F}_{q}\rtimes \mathbb{Z}_{q-1}.\label{eq:transversal}
	\end{align}
	\begin{prop}
		The subgroup $K$ is a left-transversal of $S$ in $\pgl{2}{q}$.\label{prop:transversal}
	\end{prop}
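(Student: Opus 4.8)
The plan is to combine a straightforward order count with the triviality of $K\cap S$. First I would record the orders involved. Since $|\pgl{2}{q}| = q(q-1)(q+1)$ and, by \eqref{eq:centralizer}, $|S| = q+1$, the number of left cosets of $S$ in $\pgl{2}{q}$ is $[\pgl{2}{q}:S] = q(q-1)$. On the other hand $K$ is genuinely a subgroup of $\pgl{2}{q}$: the product and the inverse of upper-triangular matrices with a $1$ in the $(1,1)$ position are again matrices of that shape (projectively), so closure and inverses are immediate. Moreover a pair $(a,b)\in\mathbb{F}_q\times\mathbb{F}_q^*$ determines the projective class $\overline{\begin{bmatrix} 1 & a\\ 0 & b\end{bmatrix}}$ uniquely, because the nonzero $(1,1)$-entry pins down the scalar; hence $|K| = q(q-1) = [\pgl{2}{q}:S]$.

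Next I would show $K\cap S = \{\overline{I}\}$, which is the only step that needs any computation. Suppose $\overline{x}\in S$ also lies in $K$; then some scalar multiple of a representative of $\overline{x}$ is upper triangular with $(1,1)$-entry equal to $1$. The element $\overline{\begin{bmatrix} 0 & 1\\ -1 & 1\end{bmatrix}}$ appearing in \eqref{eq:centralizer} has $(1,1)$-entry $0$ and so cannot be rescaled into that form, hence is not in $K$. For $\overline{x} = \overline{\begin{bmatrix} 1 & \alpha\\ -\alpha & 1+\alpha\end{bmatrix}}$, the rescaling is forced to be trivial, so its $(2,1)$-entry $-\alpha$ must vanish; this gives $\alpha = 0$ and therefore $\overline{x} = \overline{I}$. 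Thus $K\cap S = \{\overline{I}\}$.

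Finally I would assemble the two facts. If $k_1 s_1 = k_2 s_2$ with $k_i\in K$ and $s_i\in S$, then $k_2^{-1}k_1 = s_2 s_1^{-1}\in K\cap S = \{\overline{I}\}$, so $k_1 = k_2$ and $s_1 = s_2$; that is, the multiplication map $K\times S\to\pgl{2}{q}$ is injective. Since $|K|\,|S| = |\pgl{2}{q}|$, this map is a bijection, so every $g\in\pgl{2}{q}$ has a unique factorization $g = ks$ with $k\in K$ and $s\in S$; equivalently, $K$ meets each left coset $gS$ in exactly one point, which is precisely the assertion that $K$ is a left-transversal of $S$ in $\pgl{2}{q}$. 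I do not expect a genuine obstacle here: the argument is essentially counting, and the only place where care is needed is the explicit verification that $K\cap S$ is trivial, for which the description \eqref{eq:centralizer} of $S$ is exactly what is required.
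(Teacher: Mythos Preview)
Your proof is correct and complete. The overall architecture matches the paper's: both arguments reduce the claim to $|K| = [\pgl{2}{q}:S]$ together with $K\cap S = \{1\}$, and then conclude by the obvious counting/injectivity step. The genuine difference is in how $K\cap S = \{1\}$ is established. You do it by a direct matrix check against the explicit description \eqref{eq:centralizer}: the element with $(1,1)$-entry $0$ cannot be rescaled into $K$, and for the remaining elements the rescaling is pinned down, forcing $\alpha=0$. The paper instead argues structurally: since $|S|=q+1$ and $|K|=q(q-1)$ with $q$ odd, any element of $S\cap K$ has order dividing $\gcd(q+1,q(q-1))=2$; it then observes that the unique involution of the cyclic group $S$ lies in $\psl{2}{q}$, whereas every involution of $K$ (namely $\overline{\operatorname{diag}(1,-1)}$ and its $K$-conjugates) lies outside $\psl{2}{q}$, so the intersection contains no involution and is trivial. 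Your approach is more elementary and entirely self-contained, needing nothing beyond \eqref{eq:centralizer}; the paper's approach is shorter to state but leans on the classification of involution classes in $\pgl{2}{q}$ versus $\psl{2}{q}$ discussed earlier in Section~\ref{sect:maximal-subgroups}.
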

	\begin{proof}
		Note that $|S| = q+1$ and $|K|=q(q-1)$, so the elements in $S\cap K$ cannot have order more than $2$. As $2\mid (q+1)$ and $2\mid (q-1)$, $S$ and $K$ both admit involutions, however, the involutions in $S$ are conjugate to elements of $\psl{2}{q}$, whereas the involutions in $K$ cannot be conjugate to any element of $\psl{2}{q}$. Therefore, $|S\cap K| = 1$. Since $S\cap K$ is trivial and $K$ is a subgroup, if $x,y\in K$ such that $x^{-1}y\in S$, then $x = y$.
	\end{proof}
	
	From this proposition, we deduce that $\psl{2}{q}/S = \{ kS: k\in K \}$. Note that an element $kS$, with $k\in K$, can be identified with the element of order $3$ given by $khk^{-1}$.
		
	
  		\section{Outline of the proof of Theorem~\ref{thm:main}}
	
	Let $X = (V,E)$ be a cubic arc-transitive graph satisfying Hypothesis~\ref{hyp}. Then, we have that $G^* = \pgl{2}{q}$ and $G = \psl{2}{q}$, for some odd prime power $q$. Recall that $G_v = \langle h\rangle$ and $G_v^* = \langle h, \nv \rangle$. Given an intersecting set $\mathcal{F} \subset G$, we have seen that we may assume $1\in \mathcal{F}$. Therefore, every non-identity element of $\mathcal{F}$ is conjugate to $h$. If $|\mathcal{F}|\geq 2$, then it contains an element $x$ of order $3$, and there exists $g\in G$ such that $gxg^{-1} \in \{h,h^{-1}\}$. Therefore, we may also assume that $\mathcal{F}$ contains $h$ or $h^{-1}$, provided that $|\mathcal{F}|\geq 2$, as we can replace $\mathcal{F}$ with the set $g\mathcal{F}g^{-1}$. Therefore, we will always assume that 
	\begin{align}
		\mbox{\it any intersecting set $\mathcal{F} \subset G$ such that $|\mathcal{F}|\geq 2$ contains $1$ and $h$}.\label{eq:assumption}
	\end{align}
	
	We will now give a sketch of the proof of Theorem~\ref{thm:main}. The main result is the intersection density of $G$ when $q = p^k\equiv 2\pmod 3$. Note that since $q = p^k\equiv 2 \pmod 3$, $p \equiv 2 \pmod 3$ and $k$ is odd.
	
We will use the fact that all elements of order $3$ in $G^*$, that are conjugate to elements of $G_v  = \langle h\rangle$, must be contained in $G$. Let $\mathcal{C}_3$ be the set of all the elements of $G$ of order $3$, since $h$ and $h^{-1}$ are conjugate in $G$, it follows that $\mathcal{C}_3$ is a single conjugacy class in $G$.

By definition, in the derangement graph $\Gamma_{G}$ of $G$, two vertices $x, y \in G$ are not adjacent in $\Gamma_{G}$ if and only if $xy^{-1}$ is conjugate to an element in $G_v$.  So a clique in $\overline{\Gamma_G}$ is an intersecting set of permutations. Hence, $\mathcal{C}_3$ is the neighbourhood of the identity permutation in $\overline{\Gamma_{G}}$, and by~\eqref{eq:assumption}, any maximum clique of $\overline{\Gamma_{G}}$ is contained in $\{1\}\cup \mathcal{C}_3$, and, provided it is of size at least $2$, contains $h$. Now, let $\Gamma$ be the first subconstitutent of $\overline{\Gamma_{G}}$, that is, $\Gamma = \overline{\Gamma_{G}}[\mathcal{C}_3]$ is the subgraph of $\overline{ \Gamma_{G} }$ induced by $\mathcal{C}_3$. It is clear that
	\begin{align}
		\rho(G) = \frac{\omega(\overline{\Gamma_G})}{|G|/|V|} = \frac{1+\omega(\Gamma)}{|G|/|V|}. \label{eq:density}
	\end{align}
	The analysis now entirely depends on the first subconstituent of $\Gamma$. The main step of the proof is the next lemma that we prove in the following section.
	
	\begin{lem}
		Assume that $q=p^k$, where $p$ is odd and $p\neq 3$ and let $\Gamma = \overline{\Gamma_{G}}[\mathcal{C}_3]$ as above.
		\begin{enumerate}[(i)]
			\item $G^* \leq \Aut(\Gamma)$ and acts transitively by conjugation. The vertex stabilizer with this action is isomorphic to $\operatorname{C}_{G^*}(h)$.\label{lem:main-item-1}
			\item If $q\equiv 2\pmod 3$, then $\Gamma$ is either a perfect matching or the union of two orbital graphs of $G^*$ (these orbital graphs correspond to the suborbits $\{h^{-1}\}$ and a set $N$, with $|N| = q+1$). \label{lem:main-item-2}
			\item If $q\equiv 2\pmod 3$, then the subgraph of $\Gamma$ induced by $N$ is a Cayley graph on $\mathbb{Z}_{q+1}$. \label{lem:main-item-3}
		\end{enumerate}
		
		\label{lem:main}
	\end{lem}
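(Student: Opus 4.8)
The plan is to work entirely with the concrete model set up in Section~3: the conjugacy class $\mathcal{C}_3$ of order-3 elements of $G$, with fixed representative $h = \overline{\left[\begin{smallmatrix}0&-1\\1&-1\end{smallmatrix}\right]}$, and the edge relation of $\overline{\Gamma_G}$ (non-adjacency in $\Gamma_G$), namely $x \sim y$ in $\Gamma$ iff $xy^{-1}$ is conjugate in $G^*$ to an element of $G^*_v = \langle h, \nv\rangle \cong \sym(3)$. The three parts are then handled in order, with part (ii) being the real content.

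For (i): conjugation by any $g \in G^*$ is an automorphism of $G$ that permutes $\mathcal{C}_3$ (since $\mathcal{C}_3$ is a single $G$-class, hence a union of $G^*$-classes, and in fact a single $G^*$-class here because $h, h^{-1}$ are already $G$-conjugate), and it preserves the edge relation because ``$xy^{-1}$ conjugate into $G^*_v$'' is a conjugation-invariant condition — conjugating $x,y$ by $g$ replaces $xy^{-1}$ by its $g$-conjugate. So $G^* \le \Aut(\Gamma)$ acting by conjugation on vertices; transitivity on $\mathcal{C}_3 = h^{G^*}$ is immediate, and the stabilizer of the vertex $h$ is $\operatorname{C}_{G^*}(h) = S \cong \mathbb{Z}_{q+1}$ by~\eqref{eq:centralizer}. (One should note the action is by automorphisms of the graph, not necessarily faithful if $G^*$ had a center, but $\pgl{2}{q}$ is centerless, so it is faithful.)

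For (ii): since $G^*$ acts arc-transitively\,(vertex-transitively, by (i)) on $\Gamma$, the graph $\Gamma$ is a union of basic orbital graphs of $G^*$ (Lemma~\ref{lem:transitive}), i.e.\ it is determined by which suborbits of $S$ acting by conjugation on $\mathcal{C}_3$ are edges from $h$. A suborbit $\Delta$ of $S$ consisting of elements $x$ with $x h^{-1}$ conjugate into $G^*_v$ contributes an edge-set. First I would identify the small suborbit: $h^{-1} \in \mathcal{C}_3$, and $h^{-1}h^{-1} = h \in \langle h\rangle \le G^*_v$, so $\{h^{-1}\}$ is always an edge-neighbour of $h$, and it is a singleton $S$-orbit since $S$ centralizes $h$ hence fixes $h^{-1}$. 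This alone gives a perfect matching inside $\Gamma$ (the involution $x \mapsto x^{-1}$ on $\mathcal{C}_3$). The question is whether there are further edges. Here I would count: $|\mathcal{C}_3| = |h^{G^*}| = |G^*|/|S| = q(q-1)$, and the $S$-suborbits have sizes dividing $q+1$. Using the transversal $K$ from Proposition~\ref{prop:transversal} — cosets $kS \leftrightarrow khk^{-1}$ — one computes for $x = khk^{-1}$ the trace of a matrix representative of $x h^{-1}$ (equivalently $\operatorname{Tr}$, up to sign and up to the scalar ambiguity of $\pgl$) and asks when this element is conjugate into $\sym(3)$: that forces $xh^{-1}$ to have order $1$, $2$, or $3$, i.e.\ its trace (suitably normalized) lies in a tiny set $\{0, \pm 1, \pm 2\}$ governed by the congruence of $q$ modulo $3$ and modulo $5$ (order-2 and order-3 conditions respectively, via the trace characterizations in the earlier subsections, e.g.\ $\operatorname{Tr} = \pm 1$ for order $3$, $\operatorname{Tr} = 0$ for an outer involution). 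When $q \equiv 2 \pmod 3$ there are no extra order-3 contributions of a certain kind, and one finds exactly one further non-trivial suborbit $N$ with $|N| = q+1$ when the relevant modulo-5 condition holds, and none otherwise — giving the dichotomy ``perfect matching vs.\ matching $\cup$ one $(q+1)$-valent orbital graph.'' The main obstacle is precisely this trace bookkeeping: translating ``$k \in K$'' into an explicit $\alpha$-parametrization, computing $\operatorname{Tr}(k h k^{-1} h^{-1})$ as a function on $\mathbb{F}_q$, and correctly tracking the $\pm$ and projective-scaling ambiguities so that the order-2 / order-3 conditions come out as clean polynomial conditions in $q \bmod 15$.

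For (iii): the suborbit $N$ is a single orbit of $S \cong \mathbb{Z}_{q+1}$ of size exactly $q+1$, so $S$ acts \emph{regularly} on $N$; identifying $N$ with $S$ via $s \mapsto s \cdot x_0$ for a chosen base vertex $x_0 \in N$, the subgraph $\Gamma[N]$ becomes a graph on $S \cong \mathbb{Z}_{q+1}$ on which $S$ acts regularly by (left) multiplication and as automorphisms — i.e.\ a Cayley graph on $\mathbb{Z}_{q+1}$, with connection set the set of $s \in S$ such that $x_0$ and $s x_0$ are adjacent in $\Gamma$; this set is inverse-closed because $\Gamma$ is undirected and $S$ abelian. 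This is a short, formal consequence of (i) and (ii) once $|N| = q+1$ is established, so no new obstacle arises there. I would present the three parts in the order (i), then (ii) (doing the suborbit/trace analysis), then (iii).
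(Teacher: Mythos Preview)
Parts (i) and (iii) are fine and match the paper. The gap is in (ii), where you have conflated two different trace computations and landed on the wrong governing congruence.

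Since $\mathcal{C}_3 \subset G$ and $\Gamma = \overline{\Gamma_G}[\mathcal{C}_3]$, adjacency of $x$ and $h$ means $xh^{-1} \in G$ is conjugate into $G_v = \langle h\rangle$, not into $G^*_v$; the involutions of $G^*_v$ lie in $G^* \setminus G$ (Lemma~\ref{lem:psl-transitive}) and are irrelevant here. So the only condition is that $xh^{-1}$ has order $3$, i.e.\ $\operatorname{Tr}(xh^{-1}) = \pm 1$ --- no order-$2$ term, and no $q \bmod 5$ dependence. Writing $x = khk^{-1}$ with $k = \overline{\left[\begin{smallmatrix}1&a\\0&b\end{smallmatrix}\right]} \in K$, the paper obtains $b^2 - ab + a^2 + a + 1 = \pm b$; after completing the square, the ``$-b$'' case becomes $X^2 + 3Y^2 = 0$, which has only the trivial solution (recovering $h^{-1}$) because $-3$ is a non-square when $q \equiv 2 \pmod 3$, while the ``$+b$'' case becomes $X^2 + 3Y^2 = -3\gamma$ with $\gamma \neq 0$, which has exactly $q{+}1$ solutions by the standard count for an anisotropic binary form. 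Thus $|N| = q+1$ is forced by $q \equiv 2 \pmod 3$ alone. That $N$ is a \emph{single} $S$-orbit is a separate ingredient you omit: one checks $S \cap \operatorname{C}_{G^*}(x) = \{1\}$ for every $x \in \mathcal{C}_3 \setminus \langle h\rangle$, so each $S$-orbit in $N$ already has size $q+1$.

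The $q \bmod 5$ condition you invoke belongs to the later analysis of edges \emph{inside} $N$ (the Cayley graph $\tilde\Gamma = \Gamma[N]$): there the relevant trace is $\operatorname{Tr}(V^i U V^{-i} U^{-1})$ for $U \in N$ and a generator $V$ of $S$, leading to $\alpha^2 - \alpha - 1 = 0$ and hence to whether $5$ is a square in $\mathbb{F}_q$. That decides the valency of $\tilde\Gamma$, not whether $N$ exists. As written, your plan would wrongly conclude that $\Gamma$ is a perfect matching whenever $q \equiv \pm 2 \pmod 5$.
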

	
		Since $\Gamma$ is vertex transitive, all first subconstitutents are isomorphic. Let $\Delta$ be the neighbourhood of $h$ in $\Gamma$. Note that $h^{-1} \in \Delta$. Define $\tilde{\Gamma} = \Gamma[\Delta\setminus \{h^{-1}\}]$. We will show that $\tilde{\Gamma}$ is either the empty graph, a union of cycles of length at least $4$, or a perfect matching. From this, we completely determine all possible values of $\omega(\Gamma)$, and therefore, $\rho(G)$.
	
	Then we consider the final cases for $\rho(G^*)$. By Lemma in \cite{meagher180triangles}, we know that $\rho(G^*)\leq \rho(G)$. If $\rho(G) = 1$, then it is clear that $\rho(G^*) = 1$. If $\rho(G)>1$, then we also show that $\rho(G^*) = 1$. To do this, we consider $\mathcal{F} \subset G^*$ an intersecting set of maximum size, then look at the structure of  $|\mathcal{F} \cap G|$.

	\section{Proof of Lemma~\ref{lem:main}}
	
	Let $X,G,G^*,p,$ and $k$ be as in Hypothesis~\ref{hyp}. Recall that $\mathcal{C}_3$ is the conjugacy class of order three elements, $\Gamma = \overline{\Gamma_{G}}[\mathcal{C}_3]$, and $\tilde{\Gamma} = \Gamma[\Delta]$, where $\Delta = N_{\Gamma}(h)$. Assume that $p\neq 3$, so $q \equiv \pm 1 \pmod 3$. 
	
	\subsection{Proof of Lemma~\ref{lem:main}\eqref{lem:main-item-1}} 
		
	Since the vertices of $\Gamma$ form a single conjugacy class in $G^*$, it is clear that $G^*$ acts on transitively on the vertices of $\Gamma$ by conjugation. To prove that $\Gamma$ is vertex transitive, it is enough to show that conjugation is an automorphism of $\Gamma$. Recall that two elements $g$ and $g^\prime$ of $\mathcal{C}_3$ are adjacent if and only if $g^\prime g^{-1} \in \mathcal{C}_3$. If $g,g^\prime \in \mathcal{C}_3$ and $x\in G^*$, then using the fact that $\mathcal{C}_3$ is a conjugacy class, we have
		\begin{align*}
				(xg^\prime x^{-1}) \ (x g^{-1} x^{-1}) \in \mathcal{C}_3 \Longleftrightarrow g^\prime g^{-1} \in \mathcal{C}_3.
			\end{align*}
			
	This proves that the transitive action of $G^*$ on the vertices of $\Gamma$ by conjugation preserves the edges and non-edges of $\Gamma$. If the kernel of this action is non-trivial, then there exists a non-trivial element $x\in G^*$ that commutes with every element of $\mathcal{C}_3$. As $p\neq 3$, every subgroup of order $3$ is self-centralizing, so $x$ belongs to every subgroup of order $3$ of $G$ and can only be the identity. This implies that the kernel of the action of $G^*$ is trivial, or equivalently, $G^*$ acts faithfully. 
	
	Hence, $G^*\leq \Aut(\Gamma)$, acting by conjugation, is a transitive subgroup, and thus $\Gamma$ is vertex transitive. The point-stabilizer of this transitive action is the subgroup $\operatorname{C}_{G^*}(h)$, which is a cyclic group of order $q+1$. By the orbit stabilizer lemma, the cosets of $\operatorname{C}_{G^*}(h)$ in $G$ are in one-to-one correspondence with elements of $\mathcal{C}_3$, so we can either think about cosets of $\operatorname{C}_{G^*}(h)$ in $G$, or elements of $\mathcal{C}_3$. In particular, the correspondence is given by
		\begin{align}
				ghg^{-1} \leftrightarrow g\operatorname{C}_{G^*}(h).\label{eq:correspondence}
		\end{align} This completes the proof of \eqref{lem:main-item-1}.

	\subsection{Proof of Lemma~\ref{lem:main}\eqref{lem:main-item-2}} 
	
	Assume $q \equiv 2 \pmod{3}$ and $q$ odd. By Lemma~\ref{lem:transitive}, $\Gamma$ is a union of basic orbital graphs of $G^*$.
	Let $\Delta = N_{\Gamma}(h)$. Then, $\Delta$ is a union of suborbits. As $h$ and $h^{-1}$ are adjacent in $\Gamma$ and $\{h^{-1}\}$ is an orbit of $\operatorname{C}_{G^*}(h)$, it is clear that $\{h^{-1}\}$ is among these suborbits. If $\Gamma$ is equal to the orbital graph corresponding to the suborbit $\{h^{-1}\}$, then it is a perfect matching. So we may assume that $\Gamma$ is not a perfect matching, implying that $\Delta \setminus \{h^{-1}\}\neq \varnothing$.	 We define the set $N := \Delta \setminus \{h^{-1}\}$. We claim that $N$ is an orbit of $C_{G^*}(h)$ of size $q+1$.	 
	  
	First we prove that $|N|\geq q+1$. As $p\neq 3$, every subgroup of order $3$ is self-centralizing, so for any $h^\prime\in \mathcal{C}_3$ such that $h^\prime \not \in \langle h\rangle$, we have $\operatorname{C}_{G^*}(h) \cap \operatorname{C}_{G^*}(h^\prime) = \{1\}$. Let $x \in N$. If $\left|\left\{gxg^{-1}: g\in \operatorname{C}_{G^*}(h)\right\}\right| < q+1$, then there exist $g,g^\prime \in\operatorname{C}_{G^*}(h)$ such that $gxg^{-1} = g^{\prime}x(g^\prime)^{-1}$. But, this implies $g^{-1}g^\prime \in \operatorname{C}_{G^*}(x)$, since we have $g,g^\prime \in \operatorname{C}_{G^*}(h)$ and $\operatorname{C}_{G^*}(h) \cap \operatorname{C}_{G^*}(x) = \{1\}$, we know that $g = g^\prime$. Consequently, $\left|\left\{gxg^{-1}: g\in \operatorname{C}_{G^*}(h)\right\}\right| = q+1$, and so $|N|\geq q+1$.
	
	Next, we prove that $| N |\leq q+1$. As $q\equiv 2\pmod 3$, there is a unique conjugacy class of subgroups of order $3$ in $G$. We set $H = \langle h \rangle$ with
	\begin{align*}
		h =
		\overline{\begin{bmatrix}
				0 & -1\\
				1 & -1
		\end{bmatrix}}.
	\end{align*}
	Recall that the subgroup $K$ defined in \eqref{eq:transversal} is a transversal of $S = \operatorname{C}_{G^*}(h)$. In \eqref{eq:correspondence}, the element $x = ghg^{-1}\in N$ corresponds to the coset $g\operatorname{C}_{G^*}(h)$. By the fact that $K$ is a transversal of $\operatorname{C}_{G^*}(h)$ in $G^*$, we may choose $g=k$ to be in $ K$ so that $x = ghg^{-1}=khk^{-1} \in N $.
	Hence, we consider the non-identity element
	\begin{align*}
		k = \overline{\begin{bmatrix}
			1 & a\\
			0 & b
		\end{bmatrix}} \in K,
	\end{align*}
	that is, $(a,b)\neq (0,1)$. Now, we have
	\begin{align}\label{eq:conjofh}
		\overline{\begin{bmatrix}
			1 & a\\
			0 & b
		\end{bmatrix}}
		\overline{\begin{bmatrix}
			0 & -1\\
			1 & -1
		\end{bmatrix}}
		\overline{\begin{bmatrix}
			1 & a\\
			0 & b
		\end{bmatrix}}^{-1}
		=
		\overline{\begin{bmatrix}
			a & -b^{-1}(a^2+a+1)\\
			b & -1-a
		\end{bmatrix}}.
	\end{align}
	In addition, we have
	\begin{align}
		\overline{
		\begin{bmatrix}
			1 & a\\
			0 & b
		\end{bmatrix}
		}
		\overline{
		\begin{bmatrix}
			0 & -1\\
			1 & -1
		\end{bmatrix}
		}
		\overline{
		\begin{bmatrix}
			1 & a\\
			0 & b
		\end{bmatrix}}^{-1}
		\overline{
		\begin{bmatrix}
			0 & -1\\
			1 & -1
		\end{bmatrix}}^{-1}
		=
		\overline{
		\begin{bmatrix}
			-a + b^{-1}\left(a^{2} + a + 1\right) & a \\
			a - b + 1 & b
		\end{bmatrix}}.\label{eq:trace}
	\end{align}
	The above expressions will be useful due to the fact that $ghg^{-1}$ and $h$ are adjacent if and only if $ghg^{-1}h^{-1}$ is an element of order $3$.  By \eqref{eq:trace}, 
	\begin{align*}
		\overline{\begin{bmatrix}
			1 & a\\
			0 & b
		\end{bmatrix}}
		\overline{\begin{bmatrix}
			0 & -1\\
			1 & -1
		\end{bmatrix}}
		\overline{\begin{bmatrix}
			1 & a\\
			0 & b
		\end{bmatrix}}^{-1}
		\mbox{ and }
		\overline{\begin{bmatrix}
			0 & -1\\
			1 & -1
		\end{bmatrix}}
	\end{align*}
	are adjacent if and only if $b-a + b^{-1}\left(a^{2} + a + 1\right) = \pm 1.$ We can reformulate this as
	\begin{align}
		b^2-ab+a^2+a+1 = \pm b. \label{eq:trace-1}
	\end{align}
	
	If $b^2-ab+a^2+a+1 = -b$, then we have $\left(b+(1-a)2^{-1}\right)^2-(1-a)^24^{-1}+a^2+a+1 = 0.$ Hence, if $z = b+(1-a)2^{-1}$, then 
	\begin{align}
		z^2 + (1-4^{-1})a^2+(2^{-1}+1)a+(1-4^{-1}) = 0.\label{eq:-1}
	\end{align}
	By letting $X = 2z$ and $Y = a+1$, we can see that \eqref{eq:-1} becomes
	\begin{align}
		X^2+3Y^2 = 0.\label{eq:-1-final}
	\end{align}
	As $-3$ is not a square whenever $q\equiv 2\pmod 3$, we conclude that \eqref{eq:-1-final} has a unique solution, given by $(X,Y) = (0,0)$, or equivalently, $a = b = -1$. In particular, we have
	\begin{align*}
		\overline{\begin{bmatrix}
			1 & -1\\
			0 & -1
		\end{bmatrix}}
		\overline{\begin{bmatrix}
			0 & -1\\
			1 & -1
		\end{bmatrix}}
		\overline{\begin{bmatrix}
			1 & -1\\
			0 & -1
		\end{bmatrix}}^{-1}
		=
		\overline{\begin{bmatrix}
			-1 & 1\\
			-1 & 0
		\end{bmatrix}}.
	\end{align*}  
	Therefore, this solution corresponds to $h^{-1}$.
	
	If $b^2-ab+a^2+a+1 = b$, then $b^2-(a+1)b+a^2+a+1 = \left(b-(a+1)2^{-1}\right)^2-4^{-1}(a+1)^2+a^2+a+1 =  0$. Setting $z = b-(a+1)2^{-1}$, this is 
	\begin{align}
		z^2 + (1-4^{-1})a^2+(1-2^{-1})a+(1-4^{-1}) = 0,
	\end{align}
which is equivalent to 
	\begin{align}
		4 z^2 + 3 a^2+ 2 a+3 = 0.\label{eq:+1}
	\end{align}
	Similar to the previous case, we set $X = 2z$, $Y = a+ 3^{-1}$, and $\gamma = 1- 3^{-2}$ and \eqref{eq:+1} becomes 
	\begin{align}
		X^2+3Y^2 + 3\gamma = 0.\label{eq:+1-final}
	\end{align}
	From \cite[Lemma~6.24]{lidl1997finite}, the above equation has exactly $q+1$ solutions in $\mathbb{F}_q$, as $-3$ is not a square in $\mathbb{F}_q$. Each of these $q+1$ solutions produce a unique element of $N$. Therefore, $|N| = q+1$. This completes the proof.
		
	\subsection{Proof of Lemma~\ref{lem:main}~\eqref{lem:main-item-3}}
	We saw in the previous section that if $q\equiv 2 \pmod 3$, then $\Gamma$ is a union of at most two orbital graphs. In particular, the neighbourhood of $h$ in $\Gamma$ is either $\Delta = \{h^{-1}\}$ or $\Delta = \{h^{-1}\}\cup N$, where  $N$ is a suborbit of size $|N|=q+1$.

	If $N=\varnothing$, then the statement in Lemma~\ref{lem:main}~\eqref{lem:main-item-3} holds. If $|N| = q+1$, then $\operatorname{C}_{G^*}(h)$ acts regularly on $N$, and so the subgraph of $\Gamma$ induced by $N$ is a Cayley graph of $\operatorname{C}_{G^*}(h) \cong \mathbb{Z}_{q+1}$.
	
	Let $\tilde{\Gamma}$ be the subgraph of $\Gamma$ induced by $N$. Assume that $\operatorname{C}_{G^*}(h) = \langle V\rangle\cong \mathbb{Z}_{q+1}$. 
	If $U \in N$, then the vertices of  $\tilde{\Gamma}$ are $V^iUV^{-i}$ for $0\leq i\leq q$, and a vertex is adjacent to $U$ if and only if $V^iUV^{-i} U^{-1}$ has order 3. Thus $\tilde{\Gamma}$ is isomorphic to $\operatorname{Cay}(\langle V\rangle,T)$, where 
	$$T = \left\{ V^i:0\leq i\leq q, \operatorname{Tr}(V^iUV^{-i}U^{-1}) = \pm 1 \right\}.$$
	As the trace is invariant by cyclic permutation, we also have $$\operatorname{Tr}(V^iUV^{-i}U^{-1}) = \operatorname{Tr}(UV^{-i}U^{-1}V^i) = \operatorname{Tr}\left(U\left(V^{-i}UV^i\right)^{-1}\right)$$
	for any $0\leq i\leq q.$

\section{The $1$-arc regular subgroup $G = \psl{2}{q}$}
	
	Let $X,G,G^*$ be as in Hypothesis~\ref{hyp} and $q\equiv 2\pmod 3$. Let $\Gamma$ be the subgraph of $\overline{\Gamma_{G}}$ induced by $\mathcal{C}_3$. Under the assumption in~\eqref{eq:assumption}, a maximum clique of $\Gamma$ contains $h$, so under this assumption, a canonical coclique of $\Gamma_{G}$ is $\langle h \rangle$; in $\Gamma$ a \itbf{canonical clique} is a clique equal to $\{h,h^{-1}\}$. We recall the following lemma.
	
	\begin{lem}\cite[Proposition~2.13]{hujdurovic2022intersection-cyclic}
		Let $q = p^k$ for a prime $p\neq 3$. Then, under the assumption \eqref{eq:assumption}, the non-canonical cocliques of $\Gamma_G$ do not contain the vertex-stabilizer $G_v = \langle h \rangle$.\label{lem:basic-int-set}
		\label{lem:nostab}
	\end{lem}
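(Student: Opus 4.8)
The plan is to argue by contradiction: suppose $\mathcal{C}$ is a coclique of $\Gamma_G$ that contains the point-stabilizer $G_v = \langle h\rangle = \{1,h,h^{-1}\}$ and that is \emph{not} canonical, i.e.\ $\mathcal{C}\supsetneq \langle h\rangle$. Since $\mathcal{C}$ is intersecting and contains $1$, every element of $\mathcal{C}$ is conjugate to $h$ (it lies in $\mathcal{C}_3$) or equals $1$; by assumption $\mathcal{C}$ also contains $h$ and $h^{-1}$. Pick $x\in \mathcal{C}\setminus\langle h\rangle$. Then $x$ is an element of order $3$ with $x\notin\langle h\rangle$, and the intersecting condition forces $xh^{-1}$ and $x h = x(h^{-1})^{-1}$ both to be non-derangements, hence conjugate to elements of $\langle h\rangle$, i.e.\ of order dividing $3$. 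So in the notation of Lemma~\ref{lem:main}, $x$ is a vertex of $\Gamma$ adjacent in $\Gamma$ to \emph{both} $h$ and $h^{-1}$; equivalently, $x\in \Delta\cap \Delta'$ where $\Delta = N_\Gamma(h)$ and $\Delta' = N_\Gamma(h^{-1})$.

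The key step is then a counting/structure argument showing no such common neighbour $x$ exists outside $\langle h\rangle$. First I would use the correspondence \eqref{eq:correspondence} to write $x = khk^{-1}$ for a suitable $k$ in the transversal $K$ of $S=\mathrm{C}_{G^*}(h)$ from Proposition~\ref{prop:transversal}, $k = \overline{\left[\begin{smallmatrix}1 & a\\ 0 & b\end{smallmatrix}\right]}$ with $(a,b)\neq(0,1)$. Adjacency of $x$ to $h$ is governed by equation \eqref{eq:trace-1}, namely $b^2 - ab + a^2 + a + 1 = \pm b$. For adjacency of $x$ to $h^{-1}$ one runs the identical computation with $h$ replaced by $h^{-1} = \overline{\left[\begin{smallmatrix}-1 & 1\\ -1 & 0\end{smallmatrix}\right]}$: conjugating $h^{-1}$ by $k$ and multiplying by $h$ gives a matrix whose trace condition is a second quadratic in $(a,b)$. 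The claim is that the only simultaneous solutions of the two quadratics are exactly the three parameters $(a,b)$ corresponding to $1$, $h$, $h^{-1}$ — and in fact, since $x\in\mathcal{C}_3\setminus\{1\}$, the solution $a=b=-1$ (which gave $h^{-1}$ in the proof of Lemma~\ref{lem:main}\eqref{lem:main-item-2}) is the relevant degenerate one, and we must rule out any further solution. Intersecting the solution set \eqref{eq:+1-final} of $x\sim h$ with the analogous conic for $x\sim h^{-1}$ should, using that $-3$ is a non-square when $q\equiv 2\pmod 3$, leave no admissible point, giving the contradiction.

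Alternatively, and perhaps more cleanly, I would phrase the whole thing inside $\Gamma$: the hypothesis says $h^{-1}\in\mathcal{C}$ and $x\in\mathcal{C}$ with $x\neq h^{-1}$, so $\{h,h^{-1},x\}$ is a triangle in $\Gamma$; since $\Gamma$ is vertex- and arc-transitive (Lemma~\ref{lem:main}\eqref{lem:main-item-1}), such a triangle exists iff $h^{-1}$ has a neighbour other than $h$ inside $N_\Gamma(h)=\{h^{-1}\}\cup N$, i.e.\ iff $N_\Gamma(h^{-1})\cap N\neq\varnothing$. But $N$ is a single $S$-orbit and $h^{-1}$ is the unique $S$-fixed point in $N_\Gamma(h)$, so $h^{-1}$ is $S$-central while $N$ is moved regularly; a short computation (again reducing to a conic over $\mathbb{F}_q$ with no zero because $-3$ is a non-square) shows $h^{-1}$ is not adjacent to any element of $N$ when $q\equiv 2\pmod 3$. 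Hence $\mathcal{C}$ cannot properly contain $\langle h\rangle$.

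The main obstacle is the explicit intersection of the two quadratic conditions — showing that the second trace equation (adjacency to $h^{-1}$) cuts the $q+1$-point conic \eqref{eq:+1-final} down to only the forced degenerate point. I expect this to reduce, after the substitutions $X = 2z$, $Y = a+1$ (or a mild variant), to asserting that a system of the form $X^2 + 3Y^2 = c_1$, (another conic) $=c_2$ has no common solution unless $c_1 = c_2 = 0$, which follows from $-3\notin(\mathbb{F}_q^*)^2$; the bookkeeping with the $\pm$ signs in both trace conditions is the only genuinely fiddly part, but it is finite and routine.
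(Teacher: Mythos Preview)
The paper does not give its own proof of this lemma; it is quoted from \cite[Proposition~2.13]{hujdurovic2022intersection-cyclic}. So there is no in-paper argument to compare against, and I will assess your sketch on its own terms.

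Your overall strategy is correct: a coclique of $\Gamma_G$ properly containing $\langle h\rangle$ yields some $x\in\mathcal{C}_3\setminus\langle h\rangle$ adjacent in $\Gamma$ to both $h$ and $h^{-1}$, and one must rule this out. The problem is that you tie the argument to $q\equiv 2\pmod 3$, both through the non-squareness of $-3$ and through the decomposition $N_\Gamma(h)=\{h^{-1}\}\cup N$ of Lemma~\ref{lem:main}\eqref{lem:main-item-2}, whereas the lemma is stated for all odd $p\neq 3$. This restriction is unnecessary: the two trace conditions combine far more simply than you anticipate, and no conic-intersection or quadratic-residue input is needed.

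Writing $x=khk^{-1}$ with $k=\overline{\left[\begin{smallmatrix}1 & a\\ 0 & b\end{smallmatrix}\right]}$ as you propose, the adjacency $x\sim h$ reads $b^2-ab+a^2+a+1=\epsilon b$ for some $\epsilon\in\{\pm1\}$, exactly \eqref{eq:trace-1}. The parallel computation for $x\sim h^{-1}$, i.e.\ $\operatorname{Tr}(xh)=\delta\in\{\pm1\}$, gives (after multiplying through by $-b$) the equation $b^2-ab+a^2+a+1=(1-\delta)b$. The left-hand sides are identical, so $b\neq 0$ forces $\epsilon=1-\delta$. But $\delta=1$ gives $\epsilon=0$ and $\delta=-1$ gives $\epsilon=2$; neither lies in $\{\pm1\}$ when $p$ is odd and $p\neq 3$. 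That is the entire argument, and it covers both congruence classes of $q$ modulo $3$ at once.

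Two minor remarks. You assert that $\Gamma$ is arc-transitive, but Lemma~\ref{lem:main}\eqref{lem:main-item-1} only establishes vertex-transitivity; when $\Gamma$ is a union of two orbital graphs of degrees $1$ and $q+1$ it is certainly not arc-transitive, so that claim should be dropped (you do not actually use it). And the anticipated ``fiddly'' intersection of two conics with $\pm$ bookkeeping never materialises: the two adjacency conditions share the same quadratic form on the left, so only the scalars $\epsilon$ and $1-\delta$ need to be compared.
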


By Lemma~\ref{lem:main}, $\Gamma$ is equal to either the orbital graph of corresponding to the suborbit $\{h^{-1}\}$ or the union of two orbital graphs corresponding to the suborbits $\{h^{-1}\}$ and $N$, where $|N| = q+1$. If $\Gamma$ is the orbital graph corresponding to $\{h^{-1}\}$, then it is equal to a perfect matching. Therefore, $\omega(\Gamma) = 2$, and by~\eqref{eq:density}, we have $\rho(G) = 1$.

	If the neighbourhood of $h$ in $\Gamma$ is $\Delta = \{h^{-1}\}\cup N$, where $N$ is a suborbit of $\operatorname{C}_{G^*}(h)$, then by Lemma~\ref{lem:main}~\eqref{lem:main-item-3}, we know that $\tilde{\Gamma} = \Gamma[N]$ is a Cayley graph of $\operatorname{C}_{G^*}(h)$. 
Under the assumption \eqref{eq:assumption}, Lemma~\ref{lem:nostab} implies that  implies a non-canonical clique of $\Gamma$ cannot contain $h^{-1}$. This means that we can focus on $\omega(\tilde{\Gamma})$, since in this case
	\begin{align*}
		\rho(G) = \frac{2+\omega(\tilde{\Gamma})}{|G|/|V|}. 
	\end{align*}

	Let $U \in N$, from Equation~\ref{eq:conjofh}, and the comments following it,
	\begin{align*}
		U 		=
		\overline{ \begin{bmatrix}
			a & b-a-1\\
			b & -1-a
		\end{bmatrix} }
		\mbox{ and }
		U^{-1}
		=
		\overline{ \begin{bmatrix}
			-1-a & -b+a+1\\
			-b & a
		\end{bmatrix}}	
	\end{align*}
	for some $a\in \mathbb{F}_q$ and $b\in \mathbb{F}_q^*$ that satisfy $b^2-ab+a^2+a+1 = b$. As $N$ is an orbit of $\operatorname{C}_{G^*}(h)$ acting by conjugation, we can obtain all elements of $N$ by conjugating $U$. Recall that
	\begin{align}
	\operatorname{C}_{G^*}(h)
		= 
		\left\{
		\overline{  \begin{bmatrix}
			1 & \alpha\\
			-\alpha & 1+\alpha
		\end{bmatrix} }
		: 
		\alpha \in \mathbb{F}_{q}
		\right\}\cup \left\{ 
		\overline{  \begin{bmatrix}
			0 & -1\\
			1 & -1
		\end{bmatrix} } \right\}.
	\end{align}
	Assume that $\operatorname{C}_{G^*}(h) = \langle V\rangle$, where 
	$V = \overline{  \begin{bmatrix}
		1 & \alpha'\\
		-\alpha' & 1+\alpha'
	\end{bmatrix} }$, 
	for some $\alpha' \in \mathbb{F}_q$ and $V^\frac{q+1}{3} = h$. From this, we can see that $N = \{ V^iUV^{-i} : 0\leq i\leq q \}$. The next results consider the adjacencies between the elements $V^iUV^{-i}$.
	
	\begin{lem}
		For any $ i\in \{1,2\}$, the vertices $U$ and $h^iUh^{-i}$ are not adjacent in $\tilde{\Gamma}$.\label{lem:order3}
	\end{lem}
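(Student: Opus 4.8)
The plan is to argue that $h^iUh^{-i}$ lies in the \emph{same} conjugacy-class neighbourhood structure as $h$ itself, and then exploit the fact that $\langle h\rangle$ is a canonical clique. Recall that $U \in N = \Delta\setminus\{h^{-1}\}$, so $U$ is adjacent to $h$ in $\Gamma$; equivalently, $Uh^{-1}$ has order $3$, i.e.\ $\operatorname{Tr}$ of a representing matrix is $\pm 1$. First I would translate the desired non-adjacency of $U$ and $h^iUh^{-i}$ into the statement that $(h^iUh^{-i})U^{-1}$ does \emph{not} have order $3$, i.e.\ its trace is not $\pm 1$. Since $h = V^{\frac{q+1}{3}}$ generates the subgroup of order $3$ inside $\operatorname{C}_{G^*}(h) = \langle V\rangle$, the element $h^iUh^{-i}$ is one of the vertices $V^{j}UV^{-j}$ of $\tilde\Gamma$ with $j = i\cdot\frac{q+1}{3}$; so this is really a statement about a specific edge (or non-edge) of the Cayley graph $\operatorname{Cay}(\langle V\rangle, T)$ from Lemma~\ref{lem:main}\eqref{lem:main-item-3}, namely whether $V^{j} \in T$ for $j = \frac{q+1}{3}$ and $j = \frac{2(q+1)}{3}$.

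The cleanest route, which I would try first, is conceptual rather than computational. Consider the element $g = h^iUh^{-i}U^{-1}$; I claim $g$ cannot have order $3$. Suppose for contradiction it does. Then $\{1, h^iUh^{-i}, U\}$ — after left-translating by $U$ and conjugating — would be (a piece of) an intersecting set, and more to the point, $h^iUh^{-i}$ and $U$ would be adjacent in $\tilde\Gamma$. But $U$ is adjacent to $h$ (since $U\in N$), and $h^iUh^{-i}$ is conjugate to $U$ by an element of $\langle h\rangle \leq \operatorname{C}_{G^*}(h)$, hence is \emph{also} adjacent to $h$ (conjugation by $\operatorname{C}_{G^*}(h)$ fixes $h$ and is an automorphism of $\Gamma$ by Lemma~\ref{lem:main}\eqref{lem:main-item-1}). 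Thus $\{h, U, h^iUh^{-i}\}$ would be a triangle in $\Gamma$ containing $h$ and an element of $N$. Translating back, $U^{-1}\{h,U,h^iUh^{-i}\}$ or a suitable conjugate gives a non-canonical clique of $\Gamma$ containing $h$; since it has size $3 > 2$ it is genuinely non-canonical, yet it contains $h = h^1$. By the assumption~\eqref{eq:assumption} we may assume every intersecting set of size $\geq 2$ contains $1$ and $h$, and Lemma~\ref{lem:nostab} says non-canonical cliques of $\Gamma$ cannot contain $h^{-1}$ — so I would need to set up the translate so that the forbidden element is $h^{-1}$, i.e.\ work with $U^{-1}\cdot$ (triangle) and check which vertex maps to $h^{-1}$. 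This bookkeeping — making Lemma~\ref{lem:nostab} bite — is the step that needs care.

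If the conceptual argument is awkward to make airtight (the application of Lemma~\ref{lem:nostab} requires the clique to contain $h^{-1}$, not just $h$, so one must choose the conjugating/translating element correctly), the fallback is a direct trace computation. Using the explicit matrices for $U$, $U^{-1}$, and $h = \overline{\begin{bmatrix}0 & -1\\ 1 & -1\end{bmatrix}}$, one computes $\operatorname{Tr}$ of a representing matrix of $hUh^{-1}U^{-1}$ and of $h^2Uh^{-2}U^{-1}$ as polynomials in $a,b$, then substitutes the constraint $b^2 - ab + a^2 + a + 1 = b$ (which defines $N$, per Equation~\ref{eq:trace-1}) and checks that the resulting value is never $\pm 1$ in $\mathbb{F}_q$ — here the fact that $-3$ is a non-square when $q\equiv 2\pmod 3$ should again force the only coincidences to be the excluded solution $a=b=-1$ corresponding to $h^{-1}$. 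I expect the main obstacle to be precisely this interplay: showing the exceptional solutions of the trace-$\pm1$ equations are exactly the ones already accounted for (the $h^{-1}$ case), so that no \emph{new} edge from $U$ to $h^iUh^{-i}$ appears. Once that is in hand, the lemma follows immediately.
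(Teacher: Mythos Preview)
Your conceptual route via Lemma~\ref{lem:nostab} does not close. For that lemma to bite you need an intersecting set containing all of $\langle h\rangle=\{1,h,h^{-1}\}$. The candidate set $\{1,h,U,h^{i}Uh^{-i}\}$ contains $1$ and $h$ but, since $U$ and $h^{i}Uh^{-i}$ both lie in $N=\Delta\setminus\{h^{-1}\}$, it never contains $h^{-1}$. Translating or conjugating does not help: after renormalising so that the set contains $1$ and $h$, the remaining two elements are forced, and there is no mechanism that pins either of them to $h^{-1}$. So no contradiction with Lemma~\ref{lem:nostab} is available, and this line should be abandoned rather than patched.

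Your fallback \emph{is} the paper's approach, but it is far cleaner than you anticipate. One simply multiplies out $hUh^{-1}U^{-1}$ with the explicit matrices for $h$, $h^{-1}$, $U$, $U^{-1}$, then substitutes the defining relation $b^{2}-ab+a^{2}+a+1=b$ for $U\in N$. The trace collapses to $0$ identically; there is no case analysis and no appeal to $-3$ being a non-square. Since $0\neq\pm 1$ in odd characteristic, $hUh^{-1}U^{-1}$ does not have order $3$, so $U\not\sim hUh^{-1}$. Finally, you do not need a second computation for $i=2$: cyclic invariance of trace gives
\[
0=\operatorname{Tr}(hUh^{-1}U^{-1})=\operatorname{Tr}(Uh^{-1}U^{-1}h)=\operatorname{Tr}\bigl(U\,(h^{-1}Uh)^{-1}\bigr),
\]
so $U\not\sim h^{-1}Uh=h^{2}Uh^{-2}$ as well.
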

	\begin{proof}
		We note that
		\begin{align*}
			hUh^{-1}U^{-1} &= 
			\overline{ \begin{bmatrix}
				0 & 1\\
				-1 & 1
			\end{bmatrix}}
			\overline{ \begin{bmatrix}
				a & b-a-1\\
				b & -1-a
			\end{bmatrix}}
			\overline{ \begin{bmatrix}
				1 & -1\\
				1 & 0
			\end{bmatrix}}
			\overline{ \begin{bmatrix}
				-1-a & -b+a+1\\
				-b & a
			\end{bmatrix}} \\
			&=
			\overline{ \begin{bmatrix}
				a^{2} - {\left(a + 1\right)} b + b^{2} + 2 \, a + 1 & -a^{2} + {\left(a + 2\right)} b - b^{2} - 2 \, a - 1 \\
				a^{2} - a b + b^{2} + a & -a
			\end{bmatrix} } \\
			&= 
			\overline{  \begin{bmatrix}
				a & -a + b\\
				b-1 & -a
			\end{bmatrix}}.
		\end{align*}
		Hence, the trace of $hUh^{-1}U^{-1}$ is equal to $0$, so $U$ and $hUh^{-1}$ cannot be adjacent. Since the trace is invariant by cyclic permutation, we have $0 =\operatorname{Tr}(hUh^{-1}U^{-1}) = \operatorname{Tr}(Uh^{-1}U^{-1}h)= \operatorname{Tr}\left(U(h^{-1}Uh)^{-1}\right)$. Therefore, $U$ is also not adjacent to $h^{-1}Uh = h^2 U h^{-2}$. 	
\end{proof}
		
	Now, we consider the adjacency between $U$ and $V^iUV^{-i}$, where $i\neq \pm \frac{q+1}{3}$. Let $i \in \{0,\ldots,q\}\setminus \{ \tfrac{q+1}{3},\tfrac{2(q+1)}{3} \}$. Then, there exists $\alpha\in \mathbb{F}_q$ such that
	\begin{align*}
		V^i = 
		\overline{  \begin{bmatrix}
			1 & \alpha\\
			-\alpha & 1+\alpha
		\end{bmatrix} } .
	\end{align*}
	Note that the inverse of this element is
	\begin{align*}
		 \overline{(\alpha^2+\alpha+1)^{-1}
		\begin{bmatrix}
			1+\alpha & -\alpha\\
			\alpha &1
		\end{bmatrix}}.
	\end{align*}
	\begin{lem}
		The trace of $V^{i}UV^{-i}U^{-1}$ is $2(\alpha+1)(\alpha^2+\alpha+1)^{-1}$.
	\end{lem}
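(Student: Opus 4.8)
The plan is to prove the identity by a direct computation with $2\times2$ matrices, organised so as to avoid multiplying four matrices by hand. Throughout write $\delta=\alpha^2+\alpha+1$; for the displayed representative of $V^i$ one has $\det(V^i)=\delta\neq0$, $\operatorname{Tr}(V^i)=\alpha+2$, and the adjugate $\operatorname{adj}(V^i)$ equals $\delta$ times the displayed representative of $V^{-i}$, so that $V^iV^{-i}=I$ exactly (in particular the product $V^iUV^{-i}U^{-1}$ of the chosen representatives has determinant $1$, so its trace is an unambiguous element of $\mathbb{F}_q$). I would first record three elementary facts. (a) From the constraint $b^2-ab+a^2+a+1=b$ on the entries of $U$, a one-line expansion gives $\det(U)=1$, while $\operatorname{Tr}(U)=a+(-1-a)=-1$; hence by Cayley--Hamilton $U^2+U+I=0$, that is, $U^{-1}=-U-I$. (b) Multiplying out $V^iU$ shows that its diagonal entries are $a+\alpha b$ and $-(a+1+\alpha b)$, so $\operatorname{Tr}(V^iU)=-1$; combining this with $\det(V^iU)=\delta$ and the $2\times2$ identity $\operatorname{Tr}(M^2)=\operatorname{Tr}(M)^2-2\det M$ gives $\operatorname{Tr}\big((V^iU)^2\big)=1-2\delta$. (c) Since $V^iV^{-i}=I$, the matrix $V^iUV^{-i}$ is a genuine conjugate of $U$, so $\operatorname{Tr}(V^iUV^{-i})=\operatorname{Tr}(U)=-1$.

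Next I would reduce the target to a single auxiliary trace. Substituting $U^{-1}=-U-I$ and using (c),
\begin{align*}
\operatorname{Tr}\big(V^iUV^{-i}U^{-1}\big) &= -\operatorname{Tr}\big(V^iUV^{-i}U\big)-\operatorname{Tr}\big(V^iUV^{-i}\big) \\
&= 1-\operatorname{Tr}\big(V^iUV^{-i}U\big),
\end{align*}
so it suffices to prove $\operatorname{Tr}(V^iUV^{-i}U)=\delta^{-1}(\alpha^2-\alpha-1)$; the target would then become $1-\delta^{-1}(\alpha^2-\alpha-1)=\delta^{-1}(\delta-\alpha^2+\alpha+1)=\delta^{-1}(2\alpha+2)=2(\alpha+1)(\alpha^2+\alpha+1)^{-1}$, as claimed.

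To evaluate $\operatorname{Tr}(V^iUV^{-i}U)$ I would write $V^{-i}=\delta^{-1}\operatorname{adj}(V^i)=\delta^{-1}\big((\alpha+2)I-V^i\big)$, so that
\begin{align*}
\operatorname{Tr}\big(V^iUV^{-i}U\big)=\delta^{-1}\Big[(\alpha+2)\operatorname{Tr}\big(V^iU^2\big)-\operatorname{Tr}\big((V^iU)^2\big)\Big].
\end{align*}
By (a), $\operatorname{Tr}(V^iU^2)=-\operatorname{Tr}(V^iU)-\operatorname{Tr}(V^i)=1-(\alpha+2)=-\alpha-1$, and by (b), $\operatorname{Tr}((V^iU)^2)=1-2\delta$. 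Substituting and simplifying with $\delta=\alpha^2+\alpha+1$ gives $(\alpha+2)(-\alpha-1)-(1-2\delta)=\alpha^2-\alpha-1$, which is exactly the required value.

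I do not anticipate a genuine obstacle: everything is a finite computation in $\sln{2}{q}$ and $\pgl{2}{q}$. The only point requiring care is the bookkeeping — keeping the scalar $\delta^{-1}$ attached to the representative of $V^{-i}$ (which is what licenses (c) and the substitution $\operatorname{adj}(V^i)=(\alpha+2)I-V^i$) and tracking the signs carefully. As a cross-check one can instead multiply the four $2\times2$ matrices directly and read off the sum of the diagonal entries; the route above is merely shorter.
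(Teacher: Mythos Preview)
Your proof is correct. The key identities all check out: with the chosen representatives $\det U=1$ and $\operatorname{Tr}U=-1$ (using the constraint $b^2-ab+a^2+a+1=b$), so Cayley--Hamilton indeed gives $U^{-1}=-U-I$; the diagonal entries of $V^iU$ are $a+\alpha b$ and $-\alpha b-1-a$, giving $\operatorname{Tr}(V^iU)=-1$; and the final arithmetic $(\alpha+2)(-\alpha-1)-(1-2\delta)=\alpha^2-\alpha-1$ is right.

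The paper, by contrast, simply multiplies the four $2\times2$ matrices and records the two diagonal entries of the product, $a\alpha^2+(a-b+1)\alpha+1$ and $-a\alpha^2-(a-b-1)\alpha+1$, which visibly sum to $2(\alpha+1)$ before dividing by $\delta$. Your route is genuinely different: rather than brute-force multiplication you exploit Cayley--Hamilton (for both $U$ and $V^i$) and the identity $\operatorname{Tr}(M^2)=\operatorname{Tr}(M)^2-2\det M$ to reduce everything to the single scalar $\operatorname{Tr}(V^iU)=-1$. One payoff is that your argument makes it transparent \emph{why} the answer is independent of the particular $U\in N$ (equivalently of $a,b$): once $\operatorname{Tr}(V^iU)=-1$ is established, no further reference to $a,b$ occurs. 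The paper's direct computation obscures this, since $a$ and $b$ appear in the intermediate diagonal entries and cancel only at the end. The trade-off is that the paper's proof is a single mechanical step, whereas yours requires keeping track of which matrix identities apply to which representatives.
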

	\begin{proof}
		We have
		\begin{align*}
			V^{i}UV^{-i}U^{-1} &= 
			 \overline{  (\alpha^2+\alpha+1)^{-1}
			  \begin{bmatrix}
				1 & \alpha\\
				-\alpha & 1+\alpha
			\end{bmatrix} 
			\begin{bmatrix}
				a & b-a-1\\
				b & -1-a
			\end{bmatrix}
			\begin{bmatrix}
				1+\alpha & -\alpha\\
				\alpha &1
			\end{bmatrix}
			\begin{bmatrix}
				-1-a & -b+a+1\\
				-b & a
			\end{bmatrix} } \\
			&=
			 \overline{  (\alpha^2+\alpha+1)^{-1}
			\begin{bmatrix}
				a \alpha^{2} + {\left(a - b + 1\right)} \alpha + 1 & *\\
				* & -a\alpha^2 - (a - b - 1)\alpha + 1
			\end{bmatrix} }.
		\end{align*}
		Hence, 
		\begin{align*}
			\operatorname{Tr}(V^iUV^{-i}U^{-1}) &= (\alpha^2+\alpha+1)^{-1}(a \alpha^{2} + {\left(a - b + 1\right)} \alpha + 1-a\alpha^2 - (a - b - 1)\alpha + 1)\\
			&=2(\alpha+1)(\alpha^2+\alpha+1)^{-1}.
		\end{align*}		
	\end{proof}
	The following corollary is immediate.
	\begin{cor}
		The vertices $U$ and $V^iUV^{-i}$ are adjacent if $2(\alpha+1) = \pm (\alpha^2+\alpha+1)$.\label{cor:main}
	\end{cor}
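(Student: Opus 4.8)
The plan is to read this off directly from the trace formula of the preceding lemma together with the order-$3$ criterion, so the proof should be only a few lines. First I would recall how adjacency in $\tilde{\Gamma} = \Gamma[N]$ is defined: since $\Gamma = \overline{\Gamma_{G}}[\mathcal{C}_3]$, two order-$3$ elements $g,g'$ are adjacent in $\Gamma$ exactly when $g'g^{-1}$ again lies in $\mathcal{C}_3$, i.e.\ when $g'g^{-1}$ has order $3$. Applying this with $g' = V^iUV^{-i}$ and $g = U$, the vertices $U$ and $V^iUV^{-i}$ are adjacent in $\tilde{\Gamma}$ if and only if $V^iUV^{-i}U^{-1}$ has order $3$. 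By the lemma asserting that $\overline{A} \in \psl{2}{q}$ has order $3$ if and only if $\operatorname{Tr}(A) = \pm 1$, this is equivalent to $\operatorname{Tr}(V^iUV^{-i}U^{-1}) = \pm 1$.

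Next I would invoke the preceding lemma, which computes $\operatorname{Tr}(V^iUV^{-i}U^{-1}) = 2(\alpha+1)(\alpha^2+\alpha+1)^{-1}$; here I should first note that $\alpha^2+\alpha+1 \neq 0$ in $\mathbb{F}_q$, because $q \equiv 2 \pmod 3$ forces $-3$ to be a non-square and hence $t^2+t+1$ to be irreducible over $\mathbb{F}_q$, so the inverse makes sense. Substituting, adjacency of $U$ and $V^iUV^{-i}$ becomes $2(\alpha+1)(\alpha^2+\alpha+1)^{-1} = \pm 1$, and clearing the denominator gives exactly $2(\alpha+1) = \pm(\alpha^2+\alpha+1)$, which is the asserted condition.

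I do not expect any genuine obstacle here: the statement is an immediate consequence of the trace identity and the order-$3$ criterion. The only points requiring a moment's care are bookkeeping ones — that $V^iUV^{-i}U^{-1}$ being the identity would force its trace to be $\pm 2$ rather than $\pm 1$, so that case creates no spurious edge, and that the present computation only covers indices $i \notin \{\tfrac{q+1}{3}, \tfrac{2(q+1)}{3}\}$, the two excluded values corresponding to $V^iUV^{-i} = h^{\pm 1}Uh^{\mp 1}$, which were already shown non-adjacent to $U$ in Lemma~\ref{lem:order3}. With these remarks the argument in fact yields the full biconditional, of which the stated ``if'' direction is a special case.
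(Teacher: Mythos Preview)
Your proposal is correct and matches the paper's approach exactly: the paper simply declares the corollary ``immediate'' from the preceding trace lemma together with the order-$3$ trace criterion, which is precisely the deduction you spell out. Your additional remarks about $\alpha^2+\alpha+1\neq 0$, the identity case, and the excluded indices $i\in\{\tfrac{q+1}{3},\tfrac{2(q+1)}{3}\}$ are accurate side observations that the paper leaves implicit.
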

	The next theorem is the main result of this section.
	\begin{thm}
		If $q\equiv 2\pmod 3$, then 
		\begin{align}\label{thm:main1}
			\rho(G) = 
			\begin{cases}
				\frac{4}{3} & \mbox{ if } q = 5^{2k+1}, \mbox{ for some integer }k\geq 0, \\
				1&\mbox{ if } q\equiv \pm 2 \pmod 5,\\
				\frac{4}{3} &\mbox{ if } q\equiv \pm 1 \pmod 5.
			\end{cases}
		\end{align}
	\end{thm}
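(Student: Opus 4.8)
The plan is to reduce $\rho(G)$ to the clique number of the Cayley graph $\tilde{\Gamma}$ from Lemma~\ref{lem:main} and then to read that clique number off from the size of its connection set. Since $q\equiv 2\pmod 3$ we have $p\neq 3$ (so Lemma~\ref{lem:main} applies) and $k$ is odd. As $|G|/|V|=|G_v|=3$, combining~\eqref{eq:density}, Lemma~\ref{lem:main}\eqref{lem:main-item-2}, \eqref{lem:main-item-3} and Lemma~\ref{lem:nostab} yields $\rho(G)=\tfrac{2+\omega(\tilde{\Gamma})}{3}$, where $\tilde{\Gamma}=\Gamma[N]$, $N=N_{\Gamma}(h)\setminus\{h^{-1}\}$, and $\tilde{\Gamma}\cong\operatorname{Cay}(\langle V\rangle,T)$ with $\langle V\rangle=\operatorname{C}_{G^*}(h)\cong\mathbb{Z}_{q+1}$; here $|N|=q+1$ because the count of solutions of~\eqref{eq:+1-final} carried out in the proof of Lemma~\ref{lem:main}\eqref{lem:main-item-2} is unconditional (as $\gamma\neq 0$ for $p$ odd), so the neighbourhood of $h$ in $\Gamma$ is never just $\{h^{-1}\}$. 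It therefore suffices to compute $T$ and then $\omega(\tilde{\Gamma})$.

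First I would determine $T$ from Corollary~\ref{cor:main}: for a power $V^i$ of $V$ other than $h=V^{(q+1)/3}$, written with parameter $\alpha\in\mathbb{F}_q$ as in that corollary, one has $V^i\in T$ if and only if $2(\alpha+1)=\pm(\alpha^2+\alpha+1)$ (here $\alpha^2+\alpha+1\neq 0$ for all $\alpha\in\mathbb{F}_q$, since $-3$ is a non-square), while $h\notin T$ by Lemma~\ref{lem:order3}. The minus-sign equation is $\alpha^2+3\alpha+3=0$, with discriminant $-3$, a non-square because $q\equiv 2\pmod 3$, so it has no solutions; the plus-sign equation is $\alpha^2-\alpha-1=0$, with discriminant $5$. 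Hence $|T|$ equals the number of roots of $\alpha^2-\alpha-1$ in $\mathbb{F}_q$. As $k$ is odd, $5$ is a square in $\mathbb{F}_q=\mathbb{F}_{p^k}$ if and only if it is a square in $\mathbb{F}_p$, which by quadratic reciprocity ($5\equiv 1\pmod 4$) happens exactly when $p\equiv\pm 1\pmod 5$, i.e. when $q\equiv\pm 1\pmod 5$; and $5=0$ in $\mathbb{F}_q$ exactly when $p=5$, i.e. $q=5^{2k+1}$ (recall $k$ is odd). These three possibilities are exhaustive.

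This gives the three cases of the theorem. If $q\equiv\pm 2\pmod 5$, then $5$ is a non-square, $T=\varnothing$, $\tilde{\Gamma}$ is edgeless, $\omega(\tilde{\Gamma})=1$, and $\rho(G)=1$. If $q=5^{2k+1}$, then $\alpha^2-\alpha-1=(\alpha-3)^2$ has exactly one root, so $|T|=1$; since $T$ is inverse-closed and $\mathbb{Z}_{q+1}$ has a unique involution, $T$ consists of that involution, $\tilde{\Gamma}$ is a perfect matching, $\omega(\tilde{\Gamma})=2$, and $\rho(G)=\tfrac{4}{3}$. If $q\equiv\pm 1\pmod 5$, then $\alpha^2-\alpha-1$ has two distinct roots and $|T|=2$; the same inverse-closedness argument forces $T=\{V^j,V^{-j}\}$ with $V^j$ not an involution, so $\tilde{\Gamma}$ is a disjoint union of cycles of common length $(q+1)/\gcd(j,q+1)$. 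Such a graph contains a triangle only when this length is $3$, which forces $\gcd(j,q+1)=(q+1)/3$ and hence $V^j\in\{h,h^2\}$; but $h,h^2=h^{-1}\notin T$ by Lemma~\ref{lem:order3}, so $\tilde{\Gamma}$ is triangle-free, $\omega(\tilde{\Gamma})=2$, and $\rho(G)=\tfrac{4}{3}$.

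The hard part is the last case: ruling out $\omega(\tilde{\Gamma})\geq 3$, i.e. showing the two-element connection set $T$ does not make $\tilde{\Gamma}$ a disjoint union of triangles. The key input is Lemma~\ref{lem:order3} (that $h,h^2\notin T$), combined with the elementary fact that since $3\mid q+1$, the only inverse-closed two-element connection set in $\mathbb{Z}_{q+1}$ producing triangles is $\{h,h^2\}$. Everything else is routine, apart from a little care with the Legendre-symbol bookkeeping translating ``$5$ is a square in $\mathbb{F}_q$'' into a congruence for $q$ modulo $5$, and with the fact that the parametrisation $V^i\leftrightarrow\alpha$ omits the element $h$, so the exclusion of $h$ from $T$ has to come from Lemma~\ref{lem:order3} rather than from the displayed quadratic.
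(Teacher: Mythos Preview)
Your proposal is correct and follows essentially the same approach as the paper's proof: both reduce to the connection set of $\tilde{\Gamma}$ via Corollary~\ref{cor:main}, solve the two quadratics in $\alpha$ (discriminants $-3$ and $5$), split into the three cases by the quadratic character of $5$, and invoke Lemma~\ref{lem:order3} to exclude triangles when $|T|=2$. Your write-up is in places slightly sharper---you note that the solution count of~\eqref{eq:+1-final} forces $|N|=q+1$ always (so the perfect-matching alternative in Lemma~\ref{lem:main}\eqref{lem:main-item-2} never occurs), and your triangle-exclusion argument makes explicit that cycle length $3$ forces $V^j$ to have order $3$---but these are refinements, not a different method.
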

	\begin{proof}
		From Lemma~\ref{lem:main}, the graph $\tilde{\Gamma}$ is a Cayley graph of $\operatorname{C}_{G^*}(h) = \langle V\rangle$. Using Corollary~\ref{cor:main}, we will show that the degree of $\tilde{\Gamma}$ is either 0, 1 or 2, by finding the number of solutions to $2(\alpha+1) = \pm (\alpha^2+\alpha+1)$.		
		
		First consider the equation $2(\alpha+1) = - (\alpha^2+\alpha+1)$, in this case $\alpha^2+3\alpha+3 = 0$ and
		\begin{align*}
			\left(2\alpha+3\right)^2 = -3,
		\end{align*}
		This has no solutions, since $-3$ is never a square in $\mathbb{F}_q$ when $q\equiv 2\pmod 3$.
				
		 Next consider the other equation, $2(\alpha+1) = (\alpha^2+\alpha+1)$. In this case $\alpha^2-\alpha-1 = 0$ and we have
		 \begin{align}
		 	\left(2\alpha-1\right)^2 = 5.\label{eq:final}
		 \end{align}
		 
		Since the above equation is quadratic in $\alpha$ and has coefficients in $\mathbb{F}_p$, its solutions are in $\mathbb{F}_{p^2}$. As $q$ is an odd power of $p$ (this follows from $q \equiv 2 \pmod 3$),  $\mathbb{F}_{p^2}$ is not a subfield of $\mathbb{F}_q$, and so it is clear that \eqref{eq:final} has a solution in $\mathbb{F}_q$ if and only if it has a solution in $\mathbb{F}_p$. 
		
		 First consider the case where $p=5$, then $q = 5^{2k+1}$ for some $k\geq 0$. Equation \eqref{eq:final} becomes $2\alpha-1 = 0$, so there is only one solution, namely $\alpha = 2^{-1} = 2$. In this case, the degree of $\tilde{\Gamma}$ is 1, so it is a union of edges. Therefore,
		 \begin{align*}
		 	\rho(G) = \frac{2+\omega(\tilde{\Gamma})}{|G|/|V|} = \frac{4}{3}.
		 \end{align*}
		
		Next, assume that $p\neq 5$. Using the Legendre symbol, since $p\neq 5$, we have
		\begin{align*}
			\left(\frac{5}{p}\right)=
			\begin{cases}
				1 &\mbox{ if }p\equiv \pm 1 \pmod 5,\\
				-1 &\mbox{ if }p\equiv \pm 2 \pmod 5.
			\end{cases}
		\end{align*}
		Hence, $5$ is a square in $\mathbb{F}_p$ if and only if $p\equiv \pm 1\pmod 5.$ 
		
		If $p\equiv \pm 2\pmod 5$, then $\left(2\alpha-1\right)^2 = 5$ has no solutions and $\tilde{\Gamma}$ is a coclique. Therefore, in this case
		\begin{align*}
			\rho(G) = \frac{2+\omega(\tilde{\Gamma})}{|G|/|V|} = \frac{3}{3}=1.
		\end{align*}
		
		Finally, if $p\equiv \pm 1\pmod 5$, then $\left(2\alpha-1\right)^2 = 5$ has exactly 2 solutions and $U$ has valency $2$ in $\tilde{\Gamma}$, implying $\tilde{\Gamma}$ is a union of cycles. We will show that these cycles can never have length $3$. Indeed, if a component of $\tilde{\Gamma}$ is a cycle of length $3$, then all components of $\tilde{\Gamma}$ are cycles of length $3$ due to fact that $\tilde{\Gamma}$ is a Cayley graph. In particular, the vertices in the component containing $U$ would be exactly $U$, $V^{\tfrac{q+1}{3}}UV^{-\tfrac{q+1}{3}} = hUh^{-1}$, and $V^{\tfrac{2(q+1)}{3}}UV^{-\tfrac{2(q+1)}{3}} = h^{-1}Uh$. But by Lemma~\ref{lem:order3}, we obtain a contradiction. Hence, the components of $\tilde{\Gamma}$ are cycles of length at least $4$. Therefore,
		\begin{align*}
			\rho(G) = \frac{2+\omega(\tilde{\Gamma})}{|G|/|V|} = \frac{4}{3}.
		\end{align*}
		This completes the proof.		
	\end{proof}

\section{The $2$-arc regular group $G^* = \pgl{2}{q}$}
	
Again, we let $X,G,G^*$ be as in Hypothesis~\ref{hyp}. 
In this section, we determine all the possible intersection densities of $\rho(G^*)$. We first recall that $\rho(G^*)\leq \rho(G)$. By Theorem~\ref{thm:main1}, we conclude that $\rho(G^*) = 1$, whenever $p\equiv \pm 2\pmod 5$. Therefore, we will assume that $p\equiv a\pmod 5$, where $a\in \{0,\pm 1\}$.
	
	We recall from Theorem~\ref{thm:main1} that $\alpha(\Gamma_G)\leq 4$. As $G\leq G^*$ and $\nv \in G^*\setminus G$ is an involution, we know that $G^* = G\rtimes \langle \nv \rangle$. Let $\mathcal{F} \subset G^*$ be an intersecting set. Then, it is clear that $|\mathcal{F}\cap G|\leq 4$ and $|\mathcal{F}\cap G \nv |\leq 4$. Consequently, we deduce $|\mathcal{F}| \leq 8$ and thus $\rho(G^*) \in \{1,\tfrac{7}{6},\tfrac{4}{3}\}$. We will show now that $\rho(G^*) = 1$.
	
	Recall from Lemma~\ref{lem:trace0}, if $\overline{A} \in G^*$ is an involution fixing a vertex, then $\operatorname{Tr}(A) = 0$.
	
		\begin{thm}
		If $p \equiv 0,\pm 1 \pmod 5$, then $\rho(G^*) = 1$.
	\end{thm}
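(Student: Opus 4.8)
The plan is to bound $|\mathcal{F} \cap G\nv|$ more tightly than the trivial bound $4$, and combine it with $|\mathcal{F}\cap G| \le 4$ to rule out $\rho(G^*) \in \{7/6, 4/3\}$. Without loss of generality assume $1 \in \mathcal{F}$, so $\mathcal{F}\cap G$ is an intersecting set in $G$ containing the identity; by the assumption \eqref{eq:assumption} and Theorem~\ref{thm:main1}, if $|\mathcal{F}\cap G| \ge 2$ then (after conjugating by an element of $G$, which preserves $G^* = G\rtimes\langle\nv\rangle$ setwise up to replacing $\nv$ by a conjugate involution outside $G$) we may take $h \in \mathcal{F}$, and $\mathcal{F}\cap G \subseteq \{1\}\cup\mathcal{C}_3$. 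The key observation is that the elements of $G^*$ with a fixed point that lie in $G^*\setminus G$ are exactly those conjugate to an involution in $G_v^* = \langle h,\nv\rangle$ (the reflections of $\sym(3)$), hence exactly the elements $\overline{A}$ with $A \not\in \sln{2}{q}$-image and $\operatorname{Tr}(A) = 0$ by Lemma~\ref{lem:trace-involution}. So every element of $G\nv$ appearing in $\mathcal{F}$ has a representative matrix of trace $0$, i.e.\ is an involution outside $G$.

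Now suppose $\mathcal{F}\cap G\nv = \{g_1\nv, \dots, g_m\nv\}$ with each $g_i\nv$ an involution. For $i \ne j$, intersection of $g_i\nv$ and $g_j\nv$ means $(g_i\nv)(g_j\nv)^{-1} = g_i\nv\nv^{-1}g_j^{-1} = g_ig_j^{-1}$ (using $\nv^2 = 1$) has a fixed point, i.e.\ $g_ig_j^{-1}$ is conjugate to an element of $\langle h\rangle$, so is the identity or of order $3$. Thus $\{g_1,\dots,g_m\}$ is an intersecting set in $G$ — more precisely a clique of size $m$ in $\overline{\Gamma_G}$ — but note these $g_i$ need not contain the identity. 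However, translating by $g_1^{-1}$, $\{1, g_1^{-1}g_2, \dots, g_1^{-1}g_m\}$ is a clique in $\overline{\Gamma_G}$ of size $m$, so $m \le \alpha(\Gamma_G) \le 4$ by Theorem~\ref{thm:main1}. This alone only recovers $|\mathcal{F}| \le 8$. To improve it, use the additional constraint from $\mathcal{F}\cap G$: for $x \in \mathcal{F}\cap G$ and $g_i\nv \in \mathcal{F}\cap G\nv$, intersection forces $x(g_i\nv)^{-1} = xg_i^{-1}\nv$ to have a fixed point, hence $xg_i^{-1}\nv$ is an involution of trace $0$ and in particular $(xg_i^{-1}\nv)^2 = 1$, which gives $xg_i^{-1}\nv x g_i^{-1}\nv = 1$, i.e.\ $g_i^{-1}\nv x = x^{-1}g_i^{-1}\nv$, i.e.\ conjugation by $g_i^{-1}\nv$ inverts $x$.

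The heart of the argument is then: if $|\mathcal{F}\cap G| \ge 2$, so it contains $1$ and $h$ (and possibly more order-$3$ elements), and if $g\nv \in \mathcal{F}\cap G\nv$, then conjugation by $g^{-1}\nv$ inverts every element of $\mathcal{F}\cap G$ simultaneously. Since $\mathcal{F}\cap G$ contains $1$ and $h$, it contains $h^{-1}$ too, and $\tilde\Gamma$ (resp.\ $\Gamma$) analysis from Theorem~\ref{thm:main1} pins down the structure of $\mathcal{F}\cap G$: it is either $\{1,h,h^{-1}\}$ (size $3$, the canonical-type clique plus identity) or, in the $4/3$ case, $\{1,h,h^{-1}\}\cup\{U\}$ for a single extra vertex $U \in N$ with $U$ on a cycle of length $\ge 4$ in $\tilde\Gamma$ — and here I will need to check that no fourth vertex can be simultaneously adjacent to $1, h, h^{-1}$. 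Granting $\mathcal F\cap G$ has this shape, I then argue that the set of involutions $g\nv$ outside $G$ that invert the whole of $\mathcal{F}\cap G$ by conjugation is small: inverting both $h$ and $U$ by conjugation forces $g^{-1}\nv$ into the intersection of two cosets of centralizers/normalizers, and using Tables~\ref{tab:normalizer-psl}–\ref{tab:normalizer-pgl} (the normalizer of $\langle h\rangle$ in $G^*$ is dihedral of order $2(q+1)$, and $\operatorname{C}_{G^*}(h)\cong\mathbb{Z}_{q+1}$ is self-centralizing when $p \ne 3$), the elements inverting $h$ form a single coset $\nv\operatorname{C}_{G^*}(h)$-type set, and requiring additionally that they invert $U$ (equivalently fix no further structure) cuts this down so drastically that $|\mathcal{F}\cap G\nv| \le 1$, or more carefully $|\mathcal{F}\cap G\nv|$ is bounded by the number of order-$2$ elements in the dihedral group $\operatorname{N}_{G^*}(\langle h\rangle)$ that also invert $U$, which I expect to be at most $2$. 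Combined with $|\mathcal{F}\cap G| \le 4$ this gives $|\mathcal{F}| \le 4 + 2 = 6$... but that is still not enough, so the real finish must be: when $|\mathcal F\cap G\nv|$ is forced up near $4$, the simultaneous-inversion constraint forces $|\mathcal F\cap G| = 1$, hence $|\mathcal F| \le 1 + 4 = 5 < 6 = \tfrac{7}{6}\cdot 6$, giving $\rho(G^*) = 1$; symmetrically if $|\mathcal F\cap G\nv| \le 1$ then $|\mathcal F|\le 5$.

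The main obstacle I anticipate is making the "simultaneous inversion" counting completely rigorous: I need to show that the set $\{g\nv : g\nv \text{ inverts } h \text{ by conjugation}\}$ equals the set of reflections in the dihedral normalizer $\operatorname{N}_{G^*}(\langle h\rangle) \cong \dih{2(q+1)}$ lying outside $G$ (a coset of size $q+1$ inside it), and then that the map $g\nv \mapsto (g\nv)U(g\nv)^{-1}$ restricted to this set is injective enough — or, better, that an intersecting set cannot contain two elements $g\nv, g'\nv$ from this coset together with a nonidentity element of $\mathcal{C}_3$ unless a trace condition like the one in Corollary~\ref{cor:main} is violated. Concretely I would parametrize $g\nv$ using the transversal $K$ of $S = \operatorname{C}_{G^*}(h)$ from Proposition~\ref{prop:transversal}, write out $(g\nv)$ as $\overline{A}$ with $\operatorname{Tr}(A) = 0$, impose that it inverts $h$, and then the pairwise-intersection condition among the $g_i\nv$ reduces, exactly as in the proof of Theorem~\ref{thm:main1}, to a quadratic over $\mathbb{F}_q$ whose solution count is governed by whether $-3$ or $5$ is a square; since $p \equiv 0,\pm1 \pmod 5$ is precisely the regime where the relevant quadratic for $\mathcal{F}\cap G$ had solutions, I will need to check the companion quadratic for $\mathcal{F}\cap G\nv$ has too few, forcing $|\mathcal{F}| \le 5$ in every case and hence $\rho(G^*) = \tfrac{5}{6} \cdot \tfrac{6}{6}$... i.e.\ $\rho(G^*) = 1$ since $\rho(G^*) \ge 1$ always and the only value in $\{1,\tfrac76,\tfrac43\}$ that is $\le \tfrac56\cdot\text{(something)}$... to be safe I will simply show $|\mathcal F| \le 6$ is impossible by this case analysis, leaving $\rho(G^*) = 1$.
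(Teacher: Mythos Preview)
Your setup is right: after normalising so that $1,h\in\mathcal{F}$, every element of $\mathcal{F}\cap G\nv$ is an involution outside $G$ (trace $0$), and you correctly observe that each such involution must conjugate every $x\in\mathcal{F}\cap G$ to $x^{-1}$. (There is a slip in your inverse formula---since $g_i\nv$ is an involution, $(g_i\nv)^{-1}=g_i\nv$, not $g_i^{-1}\nv$---but the conclusion survives.) From that point, however, the argument derails.

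First, your structural claim about $\mathcal{F}\cap G$ is wrong. You assert that if $1,h\in\mathcal{F}\cap G$ then $h^{-1}\in\mathcal{F}\cap G$; but Lemma~\ref{lem:basic-int-set} says precisely the opposite: a \emph{non}-canonical coclique containing $1$ and $h$ does not contain $\langle h\rangle$, hence does not contain $h^{-1}$. The size-$4$ cliques of $\overline{\Gamma_G}$ look like $\{1,h,U,U'\}$ with $U,U'\in N$, not $\{1,h,h^{-1},U\}$. Second, your target is confused: you cannot have $|\mathcal{F}|\le 5$, since $|G_v^*|=6$ already gives $\alpha(\Gamma_{G^*})\ge 6$; and ``show $|\mathcal{F}|\le 6$ is impossible'' is backwards. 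The goal is $|\mathcal{F}|\le 6$. Third, you never actually carry out the count of involutions simultaneously inverting several order-$3$ elements, and you flag this yourself as the main obstacle; the proposal remains a sketch.

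The paper does not use your inversion observation at all. Its argument is a direct pigeonhole on traces: assuming $1,h\in\mathcal{F}$ and $\mathcal{F}\cap G\nv=\{X,Y,Z,W\}$, the two conditions $\operatorname{Tr}(X)=0$ (intersecting $1$) and $\operatorname{Tr}(Xh^{-1})=0$ (intersecting $h$) force, when the top-left entry is nonzero,
\[
X=\overline{\begin{bmatrix}1&a_x\\a_x+1&-1\end{bmatrix}}
\]
for some $a_x\in\mathbb{F}_q$, and likewise for $Y,Z,W$. Pairwise intersection then gives $2+2a_xa_y+a_x+a_y=\pm 1$ and two analogous equations for $(X,Z)$, $(X,W)$. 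Two of the three signs agree; subtracting yields $(2a_x+1)(a_y-a_z)=0$, so either $Y=Z$ (contradiction) or $a_x=-2^{-1}$. The latter forces $p=5$, and then the same pigeonhole applied to $Y,Z,W$ gives the contradiction. A short second claim handles the degenerate case where one top-left entry vanishes. This shows $|\mathcal{F}\cap G\nv|=4$ and $|\mathcal{F}\cap G|\ge 2$ are incompatible, and hence $|\mathcal{F}|\le 6$.
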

	\begin{proof}
		Without loss of generality, let $\mathcal{F}$ be an intersecting set of $G^*$. We note that the vertex-stabilizer $G^*v = \langle h, \nv \rangle$ has order $6$, so $\alpha(\Gamma_{G}^*)\geq 6$. Let $\mathcal{F} \subset G^*$ be an intersecting set. We will show by contradiction that if $|\mathcal{F} \cap G\nv | = 4$, then $|\mathcal{F}\cap G| \leq 1$.
		
		Assume that $|\mathcal{F} \cap G \nv | = 4$ and $|\mathcal{F} \cap G|\geq 2$. As $|\mathcal{F}\cap G|\geq 2$, we may further assume that $\mathcal{F}$ contains $1$ (multiply $\mathcal{F}$ by the inverse of an element from $\mathcal{F} \cap G$, so that, we still assume without loss of generality that $|\mathcal{F} \cap G\nv| = 4$ and $|\mathcal{F} \cap G|\geq 2$.) Further, by taking a conjugate, we also assume $h \in \mathcal{F}$. Let $\mathcal{F}\cap G\nv = \{ X,Y,Z,W \}$. Since $X,Y,Z,W$ fix a vertex (equivalently, are adjacent to $1$), by Lemma~\ref{lem:trace-involution}, we must have 
		\begin{align*}
			X = 
			\overline{\begin{bmatrix}
				x & t_x\\
				r_x & -x
			\end{bmatrix}},\
			Y = 
			\overline{\begin{bmatrix}
					y & t_y\\
					r_y & -y
			\end{bmatrix}},\
			Z = 
			\overline{\begin{bmatrix}
					z & t_z\\
					r_z & -z
			\end{bmatrix}},\
			W = 
			\overline{\begin{bmatrix}
					w & t_w\\
					r_w & -w
			\end{bmatrix}},
		\end{align*}
		for some $x,y,z,w \in \mathbb{F}_q$ and $r_u, t_u\in \mathbb{F}_q$ for $u\in \{x, y, z, w\}$. Moreover, since $X$ is also adjacent to $h$ in $\overline{\Gamma_{G^*}}$, using the fact that $h\in G$ and $X\in G^*\setminus G$, we have $\operatorname{Tr}(Xh) = 0$. By noting that
		\begin{align*}
			Xh=
			\overline{
			\begin{bmatrix}
				t_x & *\\
				* & x-r_x
			\end{bmatrix}},
		\end{align*}
		we deduce that $r_x = t_x+x$. Hence, we have
		\begin{align*}
			X = 
			\overline{\begin{bmatrix}
					x & t_x\\
					t_x+x & -x
			\end{bmatrix}}.
		\end{align*}
		Similarly, we also have
		\begin{align*}
			Y = 
			\overline{\begin{bmatrix}
					y & t_y\\
					t_y+y & -y
			\end{bmatrix}},\
			Z = 
			\overline{\begin{bmatrix}
					z & t_z\\
					t_z+z & -z
			\end{bmatrix}},\
			W = 
			\overline{\begin{bmatrix}
					w & t_w\\
					t_w+w & -w
			\end{bmatrix}}.
		\end{align*}
		\begin{claim}
			One of $x,y,z,w$ is equal to $0$.\label{claim:pgl1}
		\end{claim}
		\begin{proof}[Proof of Claim~\ref{claim:pgl1}]
		If $x,y,z,w$ are all non-zero, then we have
		\begin{align*}
			X = 
			\overline{\begin{bmatrix}
					1 & a_x\\
					a_x+1 & -1
			\end{bmatrix}},\
			Y = 
			\overline{\begin{bmatrix}
					1 & a_y\\
					a_y+1 & -1
			\end{bmatrix}},\
			Z = 
			\overline{\begin{bmatrix}
					1 & a_z\\
					a_z+1 & -1
			\end{bmatrix}},\
			W = 
			\overline{\begin{bmatrix}
					1 & a_w\\
					a_w+1 & -1
			\end{bmatrix}}.
		\end{align*}
		As $X$ and $Y$ are adjacent in $\overline{\Gamma_{G^*}}$, we have $\operatorname{Tr}(YX) = \pm 1$. By noting that
		\begin{align*}
			YX =
			\overline{\begin{bmatrix}
				1+a_xa_y+a_x&*\\
				* & 1+a_xa_y+a_y
			\end{bmatrix}},
		\end{align*}
		we conclude, $2+2a_xa_y +a_x+a_y = \pm 1$. Similarly, we can show that
		\begin{align*}
			2+2a_xa_z +a_x+a_z &= \pm 1\\
			2+2a_xa_w +a_x+a_w &= \pm 1.
		\end{align*}
		Since these three equations are equal to $1$ or $-1$, without loss of generality we may assume that $2+ 2 a_x a_y +a_x+a_y = 2+2 a_x a_z +a_x+a_z.$ This implies that
		\begin{align}
			(2a_x + 1)a_y = (2a_x+1)a_z.\label{eq:equality}
		\end{align}
		If $a_x \neq -2^{-1}$, then $a_y = a_z$ which means $Y=Z$, so we must have $a_x =-2^{-1}$. In this case we have
		\begin{align*}
			2+2a_xa_z +a_x+a_z = 2-2^{-1}.
		\end{align*}
		We know that $2-2^{-1} \neq 1$, so $2-2^{-1} = -1$. The latter happens if and only if $p = 5$. Therefore, if $p \neq 5$, then \eqref{eq:equality} does not hold which is a contradiction with $X$ adjacent to $Y$. So one of $x, y, z, w$ must be equal to $0$.
		
		Consider the case when $p = 5$ and $a_x = -2^{-1} = 2$, then none of $a_y, a_z, a_w$ can also be equal to $2$. The adjacency between $Y,Z,$ and $W$ yield the equations
		\begin{align*}
			2+a_y+a_z+2a_ya_z&= \pm 1\\
			2+a_y+a_w+2a_ya_w&= \pm 1\\
			2+a_z+a_w+2a_za_w &= \pm 1.
		\end{align*}
		Again, two of these equations must be both equal to $1$ or $-1$, so we may assume without loss of generality that $2+a_y+a_z+2a_ya_z = 2+a_y+a_w+2a_ya_w$. Therefore, we have
		\begin{align*}
			(2a_y+1)a_z = (2a_y+1)a_w.
		\end{align*}
		As $a_y\neq 2$, we must have $a_z = a_w$ or equivalently $Z = W$. This is a contradiction and we conclude that exactly one of $x, y, z, w$ is equal to $0$.
		\end{proof}
		
		From Claim~\ref{claim:pgl1}, we may assume without loss of generality that $x = 0$, and so
		\begin{align*}
			X = \overline{\begin{bmatrix} 				0 & 1\\ 				1 & 0 		\end{bmatrix}}.
		\end{align*}
		\begin{claim}
			$X$ cannot be adjacent to $Y,Z,$ and $W$.\label{claim:pgl2}
		\end{claim}
		\begin{proof}[Proof of Claim~\ref{claim:pgl2}]
			Clearly, we have 
			\begin{align*}
				\operatorname{Tr}(XY) &= 2a_y+1 =\pm 1\\
				\operatorname{Tr}(XZ) &= 2a_z+1 = \pm 1\\
				\operatorname{Tr}(XW) &= 2a_w+1 = \pm 1.
			\end{align*}
			We deduce that $a_y,a_z,a_w$ are not distinct, implying that $Y,Z, W$ are not distinct, which is a contradiction. 
		\end{proof}
		
		We deduce from Claim~\ref{claim:pgl2} that $|\mathcal{F}\cap G \nv |\leq 3$, contradicting the assumption that $|\mathcal{F}\cap G \nv | = 4$. Consequently, if $|\mathcal{F}\cap G \nv | = 4$, then $|\mathcal{F}\cap G| \leq 1$. This shows that any intersecting set $\mathcal{F}$ of $G^*$ of maximum size must have the property that $|\mathcal{F}| = 6$, and $|\mathcal{F}\cap G| = |\mathcal{F}\cap G \nv | = 3$. This completes the proof.

	\end{proof}


\begin{thebibliography}{10}
		
		\bibitem{behajaina2024intersection}
		A.~Behajaina, R.~Maleki, and A.~S. Razafimahatratra.
		\newblock Intersection density of imprimitive groups of degree $pq$.
		\newblock {\em J. Combin. Ser. A}, {\bf 208}:105922, 2024.
		
		\bibitem{cameron20063}
		P.~J. Cameron, G.~R. Omidi, and B.~Tayfeh-Rezaie.
		\newblock $3$-{D}esigns from ${PGL}(2, q) $.
		\newblock {\em Electronic J. Combin.}, {\bf 13}(1):R50, 2006.
		
		\bibitem{cazzola2025kronecker}
		M.~Cazzola, L.~Gogniat, and P.~Spiga.
		\newblock Kronecker classes and cliques in derangement graphs.
		\newblock {\em arXiv preprint arXiv:2502.01287}, 2025.
		
		\bibitem{Frankl1977maximum}
		M.~Deza and P.~Frankl.
		\newblock On the maximum number of permutations with given maximal or minimal
		distance.
		\newblock {\em J. Combin. Ser. A}, {\bf 22}(3):352--360, 1977.
		
		\bibitem{dobson2022symmetry}
		T.~Dobson, A.~Malni{\v{c}}, and D.~Maru{\v{s}}i{\v{c}}.
		\newblock {\em Symmetry in graphs}, volume~{\bf 198}.
		\newblock Cambridge University Press, 2022.
		
		\bibitem{godsil2009new}
		C.~Godsil and K.~Meagher.
		\newblock A new proof of the {E}rd{\H{o}}s--{K}o--{R}ado theorem for
		intersecting families of permutations.
		\newblock {\em European J. Combin.}, {\bf 30}(2):404--414, 2009.
		
		\bibitem{hujdurovic2022intersection-cyclic}
		A.~Hujdurovi{\'c}, I.~Kov{\'a}cs, K.~Kutnar, and D.~Maru{\v{s}}i{\v{c}}.
		\newblock Intersection density of transitive groups with small cyclic point
		stabilizers.
		\newblock {\em European J. Combin.}, {\bf 124}:104079, 2025.
		
		\bibitem{hujdurovic2022intersection}
		A.~Hujdurovi{\'c}, K.~Kutnar, B.~Kuzma, D.~Maru{\v{s}}i{\v{c}},
		{\v{S}}.~Miklavi{\v{c}}, and M.~Orel.
		\newblock On intersection density of transitive groups of degree a product of
		two odd primes.
		\newblock {\em Finite Fields Appl.}, {\bf 78}:101975, 2022.
		
		\bibitem{hujdurovic2021intersection}
		A.~Hujdurovi\'c, K.~Kutnar, D.~Maru\v{s}i\v{c}, and {\v{S}}.~Miklavi\v{c}.
		\newblock Intersection density of transitive groups of certain degrees.
		\newblock {\em Algebr. Comb.}, {\bf 5}(2):289--297, 2022.
		
		\bibitem{kutnar2023intersection}
		K.~Kutnar, D.~Maru{\v{s}}i{\v{c}}, and C.~Pujol.
		\newblock Intersection density of cubic symmetric graphs.
		\newblock {\em J. Algebraic Combin.}, pages 1--14, 2023.
		
		\bibitem{li2020erd}
		C.~H. Li, S.~J. Song, and V.~Pantangi.
		\newblock {E}rd{\H{o}}s-{K}o-{R}ado problems for permutation groups.
		\newblock {\em arXiv preprint arXiv:2006.10339}, 2020.
		
		\bibitem{lidl1997finite}
		R.~Lidl and H.~Niederreiter.
		\newblock {\em Finite fields}.
		\newblock Number~20. Cambridge university press, 1997.
		
		\bibitem{meagher2021erdHos}
		K.~Meagher and A.~S. Razafimahatratra.
		\newblock The {E}rd{\H{o}}s-{K}o-{R}ado {T}heorem for 2-pointwise and 2-setwise
		intersecting permutations.
		\newblock {\em Electron. J. Combin.}, {\bf 28}(4):P4--10, 2021.
		
		\bibitem{meagher2024intersection}
		K.~Meagher and A.~S. Razafimahatratra.
		\newblock On the intersection density of the {K}neser graph ${K}(n, 3)$.
		\newblock {\em European Journal of Combinatorics}, 118:103910, 2024.
		
		\bibitem{meagher180triangles}
		K.~Meagher, A.~S. Razafimahatratra, and P.~Spiga.
		\newblock On triangles in derangement graphs.
		\newblock {\em J. Combin. Ser. A}, {\bf 180}:105390, 2021.
		
		\bibitem{meagher2011erdHos}
		K.~Meagher and P.~Spiga.
		\newblock An {E}rd{\H{o}}s--{K}o--{R}ado theorem for the derangement graph of
		${PGL}(2, q)$ acting on the projective line.
		\newblock {\em J. Combin. Ser. A}, {\bf 118}(2):532--544, 2011.
		
		\bibitem{meagher2016erdHos}
		K.~Meagher, P.~Spiga, and P.~H. Tiep.
		\newblock An {E}rd{\H{o}}s--{K}o--{R}ado theorem for finite 2-transitive
		groups.
		\newblock {\em European J. Combin.}, {\bf 55}:100--118, 2016.
		
	\end{thebibliography}
%

\end{document}